\documentclass[a4paper]{amsart}
\usepackage[
left=28truemm,
right=28truemm
]{geometry}
\usepackage{amsfonts, amssymb, amsmath, verbatim, amsthm, mathrsfs, amscd, enumerate, ascmac, fancyhdr, caption, hyperref, framed, subfigure}
\usepackage{bbm}
\numberwithin{equation}{section}
\usepackage{tikz}
\usetikzlibrary{positioning,intersections, calc, arrows,decorations.markings, angles}
\theoremstyle{plain}
\newtheorem{theorem}{Theorem}[section]
\newtheorem{lemma}[theorem]{Lemma}
\newtheorem*{lemma*}{Lemma}

\newtheorem*{proposition*}{Proposition}

\newtheorem{conj}[theorem]{Conjecture}
\theoremstyle{definition}

\newtheorem*{claim*}{Claim}

\theoremstyle{remark}

\newtheorem*{Acknowledgements}{Acknowledgements}

\def\d{\mathrm{d}}

\def\C{\mathcal{C}}
\def\K{\mathcal{K}}
\def\1{\mathbbm 1}

\def\C{\mathcal{C}}
\def\K{\mathcal{K}}
\def\1{\mathbbm 1}

\newcommand{\R}{{\mathbb R}}
\newcommand{\Z}{{\mathbb Z}}

\newcommand{\N}{{\mathbb N}}



\author[Bez]{Neal Bez}
\address[Neal Bez]{Department of Mathematics, Graduate School of Science and Engineering,
Saitama University, Saitama 338-8570, Japan}
\email{nealbez@mail.saitama-u.ac.jp}

\author[Kinoshita]{Shinya Kinoshita}
\address[Shinya Kinoshita]{Department of Mathematics, Tokyo Institute of Technology, Meguro-ku, Tokyo, 152-8551, Japan}
\email{kinoshita@math.titech.ac.jp}

\author[Shiraki]{Shobu Shiraki}
\address[Shobu Shiraki]{Departamento de Matem\'atica, Instituto Superior T\'ecnico, Av. Rovisco Pais, Lisboa, 1049-001, Portugal}
\email{shobushiraki@tecnico.ulisboa.pt}
\thanks{
This work was supported by JSPS Kakenhi grant numbers 19H00644, 19H01796, 22H00098 and 23H01080 (Bez), 
21J00514, 22KJ0446 (Kinoshita), and 19H01796, 22H00098 (Shiraki). The third author is also supported by Centro de Análise Matemática, Geometria e Sistemas Dinâmicos (CAMGSD)
}

\begin{document}
\date{\today}
\title[
]
{
A note on Strichartz estimates for the wave equation with orthonormal initial data
}

\keywords{}
\subjclass[2010]{}
\begin{abstract}
This note is concerned with Strichartz estimates for the wave equation and orthonormal families of initial data. We provide a survey of the known results and present what seems to be a reasonable conjecture regarding the cases which have been left open. We also provide some new results in the maximal-in-space boundary cases.
\end{abstract}

\maketitle

\section{Introduction}
The (one-sided) wave propagator $e^{it\sqrt{-\Delta}}$ is given by
\[
e^{it\sqrt{-\Delta}}f(x) = \frac{1}{(2\pi)^d} \int_{\mathbb{R}^d} e^{i(x \cdot \xi + t|\xi|)} \widehat{f}(\xi) \, \mathrm{d}\xi
\]
for sufficiently nice initial data $f$ and we often write $Uf(t,x) = e^{it\sqrt{-\Delta}}f(x)$. Our interest here are Strichartz estimates of the form
\begin{equation} \label{e:ONSwave}
\bigg\| \sum_j \lambda_j |Uf_j|^2 \bigg\|_{L^{\frac{q}{2}}_tL^\frac{r}{2}_x} \lesssim \| \lambda \|_{\ell^\beta}
\end{equation}
where $(f_j)_j$ is a (possibly infinite) family of orthonormal functions in the homogeneous Sobolev space $\dot{H}^{s}(\mathbb{R}^d)$, $s = \frac{d}{2} - \frac{d}{r} - \frac{1}{q}$ (we refer the reader forward to the end of this section for the meaning of $\lesssim$). Such an estimate obviously implies the classical Strichartz estimate
\begin{equation} \label{e:classicalSwave}
\| Uf \|_{L^q_tL^r_x} \lesssim \|f\|_{\dot{H}^s}
\end{equation}
by considering a family of data for which all but one member is zero, and rescaling. Therefore, in what follows, we restrict attention to exponents $q$ and $r$ which admit a classical Strichartz estimate. Conversely, it is also clear that \eqref{e:classicalSwave} implies \eqref{e:ONSwave} with $\beta = 1$ by simply using the triangle inequality (and, in particular, not making use of the orthogonality) and so the challenge is to establish \eqref{e:ONSwave} for a range of $\beta$ (depending on $q$ and $r$) which is as large as possible. 

Before going forward to describe the current situation regarding \eqref{e:ONSwave}, we pause to briefly clarify which exponents $q$ and $r$ admit a classical Strichartz estimate \eqref{e:classicalSwave}. For $d \geq 2$, let us say that $(q,r) \in [2,\infty] \times [2,\infty]$ is \emph{admissible} if 
\begin{equation*} 
\frac{1}{q} \leq \frac{d-1}{2}\bigg(\frac{1}{2} - \frac{1}{r}\bigg).
\end{equation*}
Then it is widely known that if $(q,r)$ is admissible and $r < \infty$ then \eqref{e:classicalSwave} holds (see, for example, \cite{KeelTao} and the references therein). The boundary case  
\begin{equation} \label{e:classicalSwaveboundary}
\| Uf \|_{L^q_tL^\infty_x} \lesssim \|f\|_{\dot{H}^{\frac{d}{2} - \frac{1}{q}}}
\end{equation}
presents difficulties. Indeed, it is known that \eqref{e:classicalSwaveboundary} fails when $(q,d) = (4,2)$ and when $q = 2$ for all $d \geq 3$ (see \cite[Theorem 3]{FangWang} for the former claim and \cite[Theorem 1.1]{GLNY} for the latter). Also \eqref{e:classicalSwaveboundary} fails when $q = \infty$ thanks to the failure of the associated Sobolev embedding. For the remaining  $q$ (that is, $q \in (4,\infty)$ when $d = 2$, or $q \in (2,\infty)$ when $d \geq 3$) estimate \eqref{e:classicalSwaveboundary} holds; see, for example \cite[Theorem 3]{FangWang}.

Returning to the wider framework in \eqref{e:ONSwave},
Frank and Sabin \cite{FS_AJM} provided the first progress by establishing this estimate in the sharp admissible case
\begin{equation} \label{e:sharpadmissible}
\frac{1}{q} = \frac{d-1}{2}\bigg(\frac{1}{2} - \frac{1}{r}\bigg)
\end{equation}
with $q = r = \frac{2(d+1)}{d-1}$ and $\beta \leq \frac{2r}{r +2}$. Formally interpolating with the trivial estimate with $\beta = 1$ at $(q,r) = (\infty,2)$, this yields \eqref{e:ONSwave} in the sharp admissible case \eqref{e:sharpadmissible} for $r \in [2, \frac{2(d+1)}{d-1}]$ and $\beta \leq \frac{2r}{r +2}$. As far as we are aware, this interpolation argument seems to require the use of an analytic family of operators (since the regularity exponent $s$ varies); we refer the reader forward to Section \ref{section:strongtype} for arguments of a similar nature.

To discuss results which have been established since \cite{FS_AJM}, it is very convenient to introduce the notation $\beta_\sigma(q,r)$, where $\sigma > 0$, for the exponent satisfying 
\[
\frac{\sigma}{\beta_\sigma(q,r)} = \frac{1}{q} + \frac{2\sigma}{r}.
\]
Observe that $\beta_\sigma(q,r) = \frac{2r}{r + 2}$ in the case $\frac{1}{q} = \sigma(\frac{1}{2} - \frac{1}{r})$, and so the result in \cite{FS_AJM} may be viewed as the statement that \eqref{e:ONSwave} holds for $\beta \leq \beta_{\frac{d-1}{2}}(q,r)$ whenever \eqref{e:sharpadmissible} and  $r \in [2,\frac{2(d+1)}{d-1}]$. It is also trivial to see that $\beta_\sigma(q,r)$ is increasing as a function of $\sigma$ for each fixed $(q,r)$.

In terms of upper bounds on the optimal value of the exponent $\beta$, it was established in \cite{BLN_Forum} that 
\begin{equation} \label{e:necessary}
\beta \leq \min\bigg\{\beta_{\frac{d}{2}}(q,r), \frac{q}{2} \bigg\}
\end{equation}
is necessary in order for \eqref{e:ONSwave} to be true. For completeness, in the Appendix of this note, we give a sketch of the proof. One interesting consequence is that there is no hope of raising $\beta$ above 1 in the case where $q = 2$ and so we omit this case from the subsequent discussion.
We also note that $\beta_{\frac{d}{2}}(q,r) \leq \frac{q}{2}$ if and only if $\frac{d}{r} \leq \frac{d-1}{q}$.

The result in \cite{FS_AJM} was extended in \cite{BLN_Forum} and as a result the following is now known to be true.
\begin{theorem}[see \cite{BLN_Forum, FS_AJM}]  \label{t:mainknown}
Let $d \geq 2$, $q \in (2,\infty), r \in [2,\infty)$, and $\frac{1}{q} \leq \frac{d-1}{2}(\frac{1}{2} - \frac{1}{r})$. Then the estimate \eqref{e:ONSwave} holds in each of the following cases.

\emph{(i)} $\beta \leq \beta_{\frac{d-1}{2}}(q,r)$ when $\frac{d-1}{r} > \frac{d-2}{q}$.

\emph{(ii)} $\beta < \frac{q}{2}$ when $\frac{d-1}{r} \leq \frac{d-2}{q}$.\end{theorem}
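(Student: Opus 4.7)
The plan is to recast estimate \eqref{e:ONSwave} via Frank--Sabin duality as a Schatten-class bound, and then to establish the latter through complex interpolation within an analytic family of operators. Concretely, by duality, \eqref{e:ONSwave} is equivalent to
\[
\bigl\| W \, U (-\Delta)^{-s} U^* \, \overline{W} \bigr\|_{\mathcal{S}^{\beta'}} \lesssim \|W\|^2_{L^{(q/2)'}_t L^{(r/2)'}_x},
\]
where $\mathcal{S}^p$ denotes the $p$-th Schatten class on $L^2(\mathbb{R}^d)$ and $\beta'$ is the H\"older conjugate of $\beta$. It therefore suffices to prove this Schatten bound with the largest possible $\beta$ (smallest $\beta'$) for each admissible pair $(q,r)$.

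For Case (i), I would embed the operator into an analytic family $T_z$, parametrized by $z$ in a vertical strip, by letting the Sobolev exponent depend linearly on $z$. At $\Re(z)=0$, an $\mathcal{S}^\infty$ bound (operator norm) follows from the pointwise dispersive decay $\|e^{it\sqrt{-\Delta}}\psi(\sqrt{-\Delta})\|_{L^1_x \to L^\infty_x} \lesssim |t|^{-(d-1)/2}$ (with $\psi$ a smooth cutoff encoding the complex power), exactly as in the Frank--Sabin treatment of the Keel--Tao endpoint $q=r=\tfrac{2(d+1)}{d-1}$. At $\Re(z)=1$, a Hilbert--Schmidt bound follows by computing the $\mathcal{S}^2$ norm as an $L^2$ integral of the kernel and invoking \eqref{e:classicalSwave} at a suitable dual pair. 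Stein's complex interpolation between these two Schatten endpoints then gives the $\mathcal{S}^{\beta'}$ bound at intermediate $z$, with Sobolev exponent landing on $s = \tfrac{d}{2} - \tfrac{d}{r} - \tfrac{1}{q}$. The condition $\tfrac{d-1}{r} > \tfrac{d-2}{q}$ is precisely the region where this interpolation can push the Schatten exponent to the sharp value $\beta = \beta_{(d-1)/2}(q,r)$: outside it, the Strichartz endpoint used in the Hilbert--Schmidt leg would slide into the $L^\infty_x$ boundary, where \eqref{e:classicalSwave} may fail.

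For Case (ii), where $\tfrac{d-1}{r} \le \tfrac{d-2}{q}$ forces $\beta_{(d-1)/2}(q,r) \ge \tfrac{q}{2}$, a direct application of the above would try to exceed the necessary ceiling $\tfrac{q}{2}$, so the argument must be modified. I would fix $q$ and interpolate along the line of varying $r$: pick a point $(q,r_0)$ on the boundary $\tfrac{d-1}{r_0} = \tfrac{d-2}{q}$, where Case (i) already furnishes \eqref{e:ONSwave} with $\beta = \tfrac{q}{2}$, and a second point $(q,r_1)$ with $r_1 > r$, where \eqref{e:classicalSwave} yields the trivial orthonormal bound with $\beta = 1$. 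Carrying out this interpolation at the Schatten level, via a second analytic family in which the spatial Lebesgue exponent varies with the complex parameter, produces \eqref{e:ONSwave} at the target $(q,r)$ for every $\beta < \tfrac{q}{2}$; the endpoint is blocked because the target $(q,r)$ lies strictly outside the Case (i) region, and interpolation between two strong-type bounds delivers only strong-type bounds at non-endpoint exponents, which is exactly what the strict inequality $\beta < \tfrac{q}{2}$ reflects.

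The main obstacle, in both cases, is constructing the analytic families so that the Sobolev regularity, the Schatten exponent, and the spatial Lebesgue exponent simultaneously interpolate to the prescribed values, while controlling the complex powers of the wave propagator's kernel (which is oscillatory and supported near the light cone). The additional subtlety in Case (ii) is that the sought endpoint $\beta = \tfrac{q}{2}$ coincides with the necessary condition from \eqref{e:necessary}, so any method based on interpolation from interior bounds is bound to leave a strict loss there.
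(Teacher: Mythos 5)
Your Case (ii) does not work, and this is the substantive gap. First, the anchor you propose on the line $\tfrac{d-1}{r_0}=\tfrac{d-2}{q}$ with $\beta=\tfrac q2$ is not furnished by Case (i): that case requires the strict inequality $\tfrac{d-1}{r}>\tfrac{d-2}{q}$, and the critical estimate $\beta=\tfrac q2$ on that line is precisely the open endpoint (the paper only claims sharpness ``up to the critical case $\beta=\tfrac q2$''). Second, even if you granted that endpoint, interpolating at fixed $q$ in the $(\tfrac1r,\tfrac1\beta)$ plane between the boundary point $(\tfrac{1}{r_b},\tfrac2q)$ and the trivial $\beta=1$ bound at some $r_1>r$ gives, at a target $r>r_b$ strictly inside region (ii), at best $\tfrac1\beta=\tfrac2q+\bigl(1-\tfrac{r_b}{r}\bigr)\tfrac{q-2}{q}>\tfrac2q$; convexity caps you strictly below $\tfrac q2$ by a fixed margin depending on the distance to the boundary, so you never obtain the full range $\beta<\tfrac q2$. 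The point of part (ii) is exactly that the threshold stays \emph{flat} at $\tfrac q2$ as $r$ increases, and no convex combination of global estimates from region (i) and the $\beta=1$ bound can produce that. The actual mechanism (in \cite{BLN_Forum}, and mirrored in Sections \ref{section:strongtype}--\ref{section:RWT} of this paper for $r=\infty$) is frequency localization: at each dyadic frequency one has a trace-class/Bessel-type bound with $\ell^\infty$ dependence on $\lambda$ (cf.\ \eqref{i:C1 loc}, \eqref{i:loc bottom}), which exploits orthonormality and costs nothing in $\beta$; pairing this with Hilbert--Schmidt bounds coming from the dispersive estimate and then summing the dyadic pieces via Bourgain's trick (Lemma \ref{l:Bourgain's trick}) or a Keel--Tao-type bilinear interpolation is what yields $\beta<\tfrac q2$ throughout region (ii). Without this ingredient your argument proves a strictly weaker statement.

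Your Case (i) follows the right architecture (Frank--Sabin duality plus an analytic family in the Sobolev power with Schatten interpolation, as in \cite{FS_AJM, BLN_Forum}), but the details as written would not close. The dual operator $WU|D|^{-2s}U^*\overline W$ acts on $L^2(\mathbb R^{d+1})$, not $L^2(\mathbb R^d)$, and the weight norm is $\|W\|^2_{L_t^{\widetilde q}L_x^{\widetilde r}}$ with $\tfrac1q+\tfrac1{\widetilde q}=\tfrac12$ (i.e.\ exponents $2(q/2)'$, $2(r/2)'$, not $(q/2)'$, $(r/2)'$). More importantly, you have the two interpolation legs swapped: the Hilbert--Schmidt leg is the one that uses the pointwise dispersive decay of the kernel (with complex powers $|\xi|^{i\kappa}$ to control constants) together with Hardy--Littlewood--Sobolev in time, while the $\mathcal{C}^\infty$ (operator norm) leg comes from $L^2$-boundedness, i.e.\ the dual form of the classical Strichartz estimate \eqref{e:classicalSwave} via H\"older --- an $L^1_x\to L^\infty_x$ decay bound by itself gives no operator-norm control on $L^2(\mathbb R^{d+1})$, and \eqref{e:classicalSwave} does not directly bound the $L^2$ norm of the kernel. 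With the roles corrected, the scheme does reproduce the known proof of (i), including why the constraint $\tfrac{d-1}{r}>\tfrac{d-2}{q}$ appears.
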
 
It is perhaps beneficial to view this result with Figures 1 and 2 in mind. The points $A, B, C, D, E$ are given as follows.
\begin{align*}
A = \bigg(\frac{d-2}{2d}, \frac{d-1}{2d}\bigg), \quad 
B  = \bigg(\frac{d-1}{2(d+1)},\frac{d-1}{2(d+1)}\bigg), \\
C = \bigg(\frac{1}{2},0\bigg), \quad D = \bigg(0,\frac{1}{2}\bigg), \quad E = \bigg(\frac{d-3}{2(d-1)},\frac{1}{2}\bigg).
\end{align*}
When $(\frac{1}{r},\frac{1}{q})$ belongs to the interior of the region $OAC$ or the line segment $(A,C]$ (i.e. not including the endpoint at $A$), then Theorem \ref{t:mainknown} guarantees that \eqref{e:ONSwave} holds for $\beta \leq \beta_{\frac{d-1}{2}}(q,r)$. For $d \geq 3$ and in the interior of region $ODEA$, or the line segments $(O,A]$, $[A,E)$, we have that \eqref{e:ONSwave} holds for $\beta < \frac{q}{2}$. Thanks to \eqref{e:necessary}, this means that Theorem \ref{t:mainknown}(ii) is sharp up to the critical case $\beta = \frac{q}{2}$. 

\subsection{A conjecture} 
The most pressing issue appears to be closing the gap between the necessary condition \eqref{e:necessary} and the sufficient condition $\beta \leq \beta_{\frac{d-1}{2}}(q,r)$ given in Theorem \ref{t:mainknown}(i). For this it seems reasonable to us to believe that Theorem \ref{t:mainknown}(i) can be improved up to the necessary condition in \eqref{e:necessary}. This amounts to the following conjecture.
\begin{conj} \label{conj:main}
Let $d \geq 2$, $q \in (2,\infty), r \in [2,\infty)$, and $\frac{1}{q} \leq \frac{d-1}{2}(\frac{1}{2} - \frac{1}{r})$. Then the estimate \eqref{e:ONSwave} holds in each of the following cases.

\emph{(i)} $\beta \leq \beta_{\frac{d}{2}}(q,r)$ when $\frac{d}{r} > \frac{d-1}{q}$.

\emph{(ii)} $\beta < \frac{q}{2}$ when $\frac{d}{r} \leq \frac{d-1}{q}$.
\end{conj}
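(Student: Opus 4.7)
The strategy is to convert \eqref{e:ONSwave} into a Schatten-class estimate via the Frank--Sabin duality principle and then push the wave/cone analysis to the same regularity level $\sigma = \tfrac{d}{2}$ that is known for the Schr\"odinger propagator. Concretely, at $\beta = \beta_{\frac{d}{2}}(q,r)$, the estimate \eqref{e:ONSwave} is equivalent to
\[
\big\|(-\Delta)^{-s/2} U^{*} V U (-\Delta)^{-s/2}\big\|_{\mathfrak{S}^{\beta'}} \lesssim \|V\|_{L^{(q/2)'}_t L^{(r/2)'}_x}
\]
for non-negative weights $V$, with $s = \tfrac{d}{2} - \tfrac{d}{r} - \tfrac{1}{q}$. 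By \eqref{e:necessary} this Schatten exponent is essentially optimal (up to the boundary case $\beta = q/2$); the task is to close the gap between the known bound from Theorem \ref{t:mainknown}(i) (which corresponds to $\sigma = \tfrac{d-1}{2}$) and the conjectured one.

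Following the blueprint of \cite{FS_AJM,BLN_Forum}, the plan is to first establish an endpoint Schatten estimate at the sharp admissible point $q = r = \tfrac{2(d+1)}{d-1}$, $\beta = \tfrac{d}{d-1}$, and then interpolate with the trivial $\beta = 1$ estimate at $(q,r) = (\infty, 2)$ through an analytic family of operators in which the regularity $s$ varies, recovering the full range of case (i). The endpoint is the core difficulty. A natural attack is to frequency-localize via Littlewood--Paley to $\{|\xi|\sim 1\}$, sum dyadic shells through a Schatten-valued almost-orthogonality argument, and then on a single shell decompose the unit sphere of directions into angular caps. On each cap the propagator $U$ behaves like a $(d-1)$-dimensional Schr\"odinger-type operator, for which the orthonormal Strichartz estimate is available at level $\sigma = \tfrac{d-1}{2}$. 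The extra dimension needed to lift $\sigma$ to $\tfrac{d}{2}$ must come from transversality between pairs of (well-separated) caps, in the spirit of Wolff's bilinear cone restriction theorem, carried out in a Schatten-class setting.

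The hardest step will be this Schatten-class bilinear cone estimate. Unlike the paraboloid, whose second fundamental form is everywhere non-degenerate---so that a direct Frank--Sabin Stein--Tomas argument already produces $\sigma = \tfrac{d}{2}$---the cone is flat along its generators, and this loss of one unit of curvature is precisely what prevents the classical argument from reaching the conjectured endpoint. In the standard Strichartz theory, bilinear cone restriction (Wolff, Tao) and cone decoupling (Bourgain--Demeter) are used to recover this missing direction, and a Schatten-valued analogue of one of these tools seems to be exactly what is missing here. Finally, case (ii), $\tfrac{d}{r}\le\tfrac{d-1}{q}$, should follow from case (i) on the critical line $\tfrac{d}{r} = \tfrac{d-1}{q}$ by real interpolation with the trivial $L^{\infty}_x$ bound, exactly as in the proof of Theorem \ref{t:mainknown}(ii) in \cite{BLN_Forum}.
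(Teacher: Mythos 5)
What you are addressing here is Conjecture \ref{conj:main}, which the paper explicitly leaves open: the paper supplies only the necessary conditions \eqref{e:necessary} (Appendix) and the partial results of Theorem \ref{t:mainknown}, together with the Schr\"odinger analogue as supporting evidence. Your text is a research programme rather than a proof, and its central ingredient is missing precisely where the open problem sits. The reduction via the Frank--Sabin duality principle and the plan to run an analytic-family interpolation are standard and correctly set up, but the step that would actually raise the regularity parameter from $\sigma=\frac{d-1}{2}$ to $\sigma=\frac{d}{2}$ --- a Schatten-class bilinear (or decoupling-type) estimate for the cone compensating for the vanishing curvature along its generators --- is only named, not proved; you yourself acknowledge that such a tool ``seems to be exactly what is missing.'' Identifying the obstruction (the flat direction of the cone, which is why the Stein--Tomas-type argument of \cite{FS_AJM} stops at $\sigma=\frac{d-1}{2}$) is not the same as overcoming it, so nothing beyond Theorem \ref{t:mainknown} follows from your outline.

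There is also a quantitative gap in the reduction itself. Even granting your proposed endpoint at $q=r=\frac{2(d+1)}{d-1}$ with $\beta=\frac{d}{d-1}=\beta_{\frac{d}{2}}(q,r)$, interpolating with the trivial $\beta=1$ bound at $(q,r)=(\infty,2)$ only yields $\beta\leq\beta_{\frac{d}{2}}(q,r)$ on the portion $[C,B]$ of the sharp admissible line, i.e. $r\in[2,\frac{2(d+1)}{d-1}]$. Conjecture \ref{conj:main}(i), however, asserts the bound on $[C,F)$, where $F$ lies strictly beyond the Stein--Tomas point $B$ and corresponds to $r_d=\frac{2(d^2+1)}{(d-1)^2}$ (the intersection of the sharp admissible line with $\frac{d}{r}=\frac{d-1}{q}$). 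So your scheme, even if the hard bilinear step were available at the symmetric point, would not cover the full conjectured region; one would need endpoint-type Schatten estimates along the sharp line all the way up to $F$, or some further mechanism (e.g. interpolation with estimates at non-sharp-admissible points of the form you propose for case (ii)). Finally, your treatment of case (ii) is conditional on case (i) holding up to the critical line $\frac{d}{r}=\frac{d-1}{q}$, where $\beta_{\frac{d}{2}}(q,r)=\frac{q}{2}$, so it inherits the same gap. In short: the proposal is a reasonable description of why the conjecture is hard and of where progress would have to come from, but it does not constitute a proof, and its interpolation skeleton would in any case need to be strengthened to reach the claimed range.
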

In Figures 1 and 2, the following points also arise:
\[
A' = \bigg(\frac{d-1}{2(d+1)},\frac{d}{2(d+1)}\bigg), \quad E' = \bigg(\frac{d-2}{2d},\frac{1}{2}\bigg), \quad F = \bigg(\frac{(d-1)^2}{2(d^2+1)},\frac{d(d-1)}{2(d^2+1)}\bigg).
\]
Conjecture \ref{conj:main}(i) says that \eqref{e:ONSwave} holds for $\beta \leq \beta_{\frac{d}{2}}(q,r)$ in the interior of $OFC$ and the line segment $[C,F)$, and Conjecture \ref{conj:main}(ii) says that \eqref{e:ONSwave} holds for $\beta < \frac{q}{2}$ in the interior of $ODEF$, or $(O,F)$, or $[F,E)$.

Observe that the line segment $[C,E']$ corresponds to the sharp admissible line
\[
\frac{1}{q} = \frac{d}{2}\bigg(\frac{1}{2} - \frac{1}{r}\bigg)
\]
for the Strichartz estimates associated with the Schr\"odinger propagator $e^{it\Delta}$. The analogue of \eqref{e:ONSwave} for Schr\"odinger propagator takes the form
\begin{equation} \label{e:ONSSchro}
\bigg\| \sum_j \lambda_j |e^{it\Delta}f_j|^2 \bigg\|_{L^{\frac{q}{2}}_tL^\frac{r}{2}_x} \lesssim \| \lambda \|_{\ell^\beta}
\end{equation}
where $(f_j)_j$ is a family of orthonormal functions in the homogeneous Sobolev space $\dot{H}^{s}(\mathbb{R}^d)$, $s = \frac{d}{2} - \frac{d}{r} - \frac{2}{q}$, and interestingly this family of estimates is much better understood than \eqref{e:ONSwave}. In particular, it is known (see \cite{FLLS, FS_AJM, FS_Survey, BHLNS}) that for $q \in (2,\infty), r \in [2,\infty)$, and $\frac{1}{q} \leq \frac{d}{2}(\frac{1}{2} - \frac{1}{r})$, the estimate \eqref{e:ONSSchro} holds in each of the following cases.

\emph{(i)} $\beta \leq \beta_{\frac{d}{2}}(q,r)$ when $\frac{d}{r} > \frac{d-1}{q}$ (i.e. the interior of $OA'C$ and $[C,A')$).

\emph{(ii)} $\beta < \frac{q}{2}$ when $\frac{d}{r} \leq \frac{d-1}{q}$ (i.e. the interior of $ODE'A'$ and the line segments $(O,A']$, $[A',E')$).

Furthermore, the restriction in \eqref{e:necessary} is also known to be necessary for \eqref{e:ONSSchro} too. Thus, apart from certain critical/boundary cases, we have an almost complete understanding of when the estimates \eqref{e:ONSSchro} are valid. We refer the reader to Frank \emph{et. al} \cite{FLLS} for the origin of this line of study for the Schr\"odinger equation, and to \cite{FS_AJM, FS_Survey, BHLNS} for further developments on \eqref{e:ONSSchro}.


\begin{figure} 

\begin{center}
\begin{tikzpicture}[scale=4]
\draw [<->] (0,2.3) node (yaxis) [left] {$\tfrac{1}{q}$}
|- (2.3,0) node (xaxis) [below] {$\tfrac{1}{r}$};
\draw (0, 0) rectangle (2, 2);
\draw (0,0)--(6/5,8/5)--(2,0)--(0,0);

\node [above] at (1,2) {$E$};
\node [above] at (6/5,2) {$E'$};
\node [above] at (0.075,2) {$D$}; 
\node [above] at (6/5+0.02,8/5) {$A$};
\node [above] at (4/3+0.02,5/3) {$A'$};
\node [right] at (2, 0.06) {$C$};
\node [right] at (4/3,4/3) {$B$};
\node [right] at (16/13,20/13) {$F$};

\node [below] at (1,0) {$\frac{d-3}{2(d-1)}$};
\node [below] at (6/5,0) {$\frac{d-2}{2d}$};

\draw [dashed] (0,1)--(2,2);
\draw [dashed] (0,0)--(2,2);
\draw [dashed] (0,0)--(4/3,5/3);
\draw [dotted] (1,2)--(1,0);
\draw [dotted] (6/5,2)--(6/5,0);

\node [left]at (0,2) {$\frac{1}{2}$};
\node [left]at (0,1) {$\frac14$};
\node[below] at (0,0) {$O$};
\node[below] at (2,0) {$\frac{1}{2}$};
\draw (6/5,8/5)--(1,2);
\draw (6/5,2)--(2,0);

\filldraw[fill=black] (6/5,8/5) circle[radius=0.15mm];
\filldraw[fill=black] (4/3,5/3) circle[radius=0.15mm];
\filldraw[fill=black] (16/13,20/13) circle[radius=0.15mm];
\filldraw[fill=black] (6/5,2) circle[radius=0.15mm];
\filldraw[fill=black] (4/3,4/3)  circle[radius=0.15mm];
\filldraw[fill=black] (0,2)  circle[radius=0.15mm];
\filldraw[fill=black] (2,0)  circle[radius=0.15mm];
\filldraw[fill=black] (1,2)  circle[radius=0.15mm];
\end{tikzpicture}
\end{center}
\caption{The case $d \geq 3$ (note that $D = E$ when $d = 3$). On the lines $[C,E]$ and $[O,A]$ we have $\frac{1}{q} = \frac{d-1}{2}(\frac{1}{2} - \frac{1}{r})$ and $\frac{d-2}{q} = \frac{d-1}{r}$, respectively. }
\label{fig:highDknown}

\begin{center}
\begin{tikzpicture}[scale=3]
\draw [<->] (0,2.3) node (yaxis) [left] {$\tfrac{1}{q}$}
|- (2.3,0) node (xaxis) [below] {$\tfrac{1}{r}$};
\draw (0, 0) rectangle (2, 2);

\draw (2,0)--(0,2);

\node [right]at (0.06,1.01) {$A$};
\node [above]at (2/3 + 0.01,4/3 + 0.02) {$A'$};
\node [right] at (2, 0.06) {$C$};
\node [right] at (0.4, 0.8 + 0.04) {$F$};
\node [right] at (0, 2 + 0.07) {$E'$};
\node [right] at (2/3 + 0.06,2/3+0.02) {$B$};

\draw [dashed] (0,1)--(2,2);
\draw [dashed] (0,0)--(2,2);
\draw [dashed] (0,0)--(2/3,4/3);

%
\node [left]at (0,2) {$\frac{1}{2}$};
\node [left]at (0,1) {$\frac14$};
\node[below] at (0,0) {$O$};
\node[below] at (2,0) {$\frac{1}{2}$};
\draw (2,0)--(0,1);

\filldraw[fill=black] (0.4,0.8) circle[radius=0.15mm];
\filldraw[fill=black] (0,2) circle[radius=0.15mm];
\filldraw[fill=black] (0,1) circle[radius=0.15mm];
\filldraw[fill=black] (2/3,2/3)  circle[radius=0.15mm];
\filldraw[fill=black] (2/3,4/3)  circle[radius=0.15mm];
\filldraw[fill=black] (2,0)  circle[radius=0.15mm];
\end{tikzpicture}
\end{center}
\caption{The case $d = 2$. On the line $[C,A]$ we have $\frac{1}{q} = \frac{1}{2}(\frac{1}{2} - \frac{1}{r})$. }
\label{fig:2Dknown}

\end{figure}

\begin{figure}

\begin{center}
\begin{tikzpicture}[scale=3.5]
\draw [<->] (0,2.3) node (yaxis) [left] {$\tfrac{1}{\beta}$}
|- (2.3,0) node (xaxis) [below] {$\tfrac{1}{r}$};
\draw (0, 0) rectangle (2, 2);



%
\node [left]at (0,2) {$1$};
\node[below] at (0,0) {$0$};
\node[below] at (2,0) {$\frac{1}{2}$};

\draw [fill=gray!70] (2/3,0)--(2/3,2)--(18/17,24/17)--(2,2)--(2,0)--(0,0);
\draw [fill=gray!20] (2/3,2)--(1,3/2)--(2,2);

\draw [dashed] (2,0)--(2/3,2);
\draw [dashed] (2/3,4/3)--(2,2);
\draw [dotted] (1,0)--(1,3/2);
\draw [dotted] (0,3/2)--(1,3/2);
\node [below]at (0.95,0) {$\tfrac{d-2}{2d}$};
\node [left]at (0,3/2+0.03) {$\frac{d-1}{d}$};

\draw [dotted] (18/17,0)--(18/17,24/17);
\draw [dotted] (0,24/17)--(18/17,24/17);
\node [below]at (1.08,0) {$\frac{1}{r_d}$};
\node [left]at (0,24/17-0.03) {$\frac{d(d-1)}{d^2+1}$};

\node[below] at (2/3,0) {$\frac{d-3}{2(d-1)}$};

\node at (1.1,3/2-0.008) {$?$};

\filldraw[fill=black] (2/3,2) circle[radius=0.15mm];
\filldraw[fill=black] (2,0)  circle[radius=0.15mm];
\draw (2/3,2)--(2,2);
\end{tikzpicture}
\end{center}
\caption{For $d \geq 3$, the gap between the necessary condition \eqref{e:necessary} (represented by the dark gray region) and Theorem \ref{t:mainknown}(i) (represented by the light gray region) in the sharp admissible case $\frac{1}{q} = \frac{d-1}{2}(\frac{1}{2} - \frac{1}{r})$. The exponent $r_d$ is given by $r_d = \frac{2(d^2+1)}{(d-1)^2}$.}
\label{fig:highDconj}

\begin{center}
\begin{tikzpicture}[scale=3]
\draw [<->] (0,2.3) node (yaxis) [left] {$\tfrac{1}{\beta}$}
|- (2.3,0) node (xaxis) [below] {$\tfrac{1}{r}$};
\draw (0, 0) rectangle (2, 2);


\draw [thick] (0,1)--(2,2);
\draw (0,0.5)--(2,2);
%
\node [left]at (0,2) {$1$};
\node [left]at (0,1) {$\frac12$};
\node[below] at (0,0) {$0$};
\node[below] at (2,0) {$\frac{1}{2}$};

\node at (0.4, 1) {$?$};

\draw [fill=gray!70] (0,0)--(0,1)--(0.4,0.8)--(2,2)--(2,0)--(0,0);
\draw [fill=gray!20] (0,1)--(0,2)--(2,2)--(0,1);

\draw [dashed] (2,0)--(0,1);
\draw [dashed] (0,0.5)--(2,2);
\draw [dotted] (0.4,0)--(0.4,0.8);
\draw [dotted] (0,0.8)--(0.4,0.8);
\node [left]at (0,0.8) {$\frac25$};
\node[below] at (0.4,0) {$\frac{1}{10}$};

\filldraw[fill=black] (0,1) circle[radius=0.15mm];
\filldraw[fill=black] (2,0)  circle[radius=0.15mm];
\end{tikzpicture}
\end{center}
\caption{For $d = 2$, the gap between the necessary condition \eqref{e:necessary} (represented by the dark gray region) and Theorem \ref{t:mainknown}(i) (represented by the light gray region) in the sharp admissible case $\frac{1}{q} = \frac{1}{2}(\frac{1}{2} - \frac{1}{r})$.}
\label{fig:2Dconj}
\end{figure}

\subsection{Boundary cases}\label{subsection:boundaryintro}

The boundary cases $q = \infty$ and $r = \infty$ have been excluded from the statements of Theorem \ref{t:mainknown} and Conjecture \ref{conj:main}, and for the remainder of this note we mainly focus on the case $r = \infty$. When $q = \infty$, we may simply use the Sobolev inequality for orthonormal functions proved by Lieb \cite{Lieb_Sobolev} to quickly obtain \eqref{e:ONSwave} with $q = \infty$, $r \in (2,\infty)$ and $\beta < \frac{r}{2} = \beta_{\frac{d}{2}}(\infty,r)$; in light of the necessary condition \eqref{e:necessary}, we see that this result is sharp up to the critical case $\beta = \frac{r}{2}$.

When $r = \infty$, matters appear to be less simple and for the remainder of this note we shall be concerned with the estimate
\begin{equation}
\label{est:main_rinfty}
\bigg\| \sum_j \lambda_j |Uf_j|^2 \bigg\|_{L^{\frac{q}{2}}_t L^{\infty}_x} \lesssim \| \lambda \|_{\ell^\beta}, 
\end{equation}
where $(f_j)_j$ is a family of orthonormal functions in $\dot{H}^{\frac{d}{2}-\frac{1}{q}}(\mathbb{R}^d)$. Also we recall from our discussion of the classical Strichartz estimates \eqref{e:classicalSwave} that we are interested in the range $q \in (4,\infty)$ when $d = 2$, and $q \in (2,\infty)$ when $d \geq 3$. As such, we introduce the notation
\[
q_d := \begin{cases}
4 \ & (d=2),\\
2 & (d \geq 3).
\end{cases}
\]
Our first new result is the following.
\begin{theorem} \label{thm:main_strong}
For any $d \geq 2$, the estimate \eqref{est:main_rinfty} holds with $q \in (q_d,\infty)$ and $\beta < \frac{q}{2}$.
\end{theorem}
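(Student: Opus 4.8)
The plan is to pass to the dual formulation. Write $s = \frac d2 - \frac1q$ and let $T$ denote the operator $f \mapsto U((-\Delta)^{-s/2}f)$, viewed as a map from $L^2(\R^d)$ to functions of $(t,x)$; with this normalisation the classical boundary Strichartz estimate \eqref{e:classicalSwaveboundary} is exactly the statement that $T\colon L^2(\R^d)\to L^q_tL^\infty_x$ is bounded, and it is here that the hypothesis $q\in(q_d,\infty)$ is used. By the duality principle of Frank and Sabin, \eqref{est:main_rinfty} with exponent $\beta$ is equivalent to the Schatten-class bound
\begin{equation*}
\big\| W\, T T^*\, \overline W \big\|_{\mathfrak S^{\beta'}} \lesssim \| W \|_{L^p_t L^2_x}^2, \qquad p := \tfrac{2q}{q-2},
\end{equation*}
uniformly over complex weights $W=W(t,x)$, where $\tfrac1\beta+\tfrac1{\beta'}=1$. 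Since $\beta<\tfrac q2$ is the same as $\beta'>(\tfrac q2)'$, and since $W\,TT^*\,\overline W=(WT)(WT)^*$, it suffices to prove
\begin{equation} \label{eq:plan-schatten}
\| W T \|_{\mathfrak S^\gamma} \lesssim \| W \|_{L^p_t L^2_x} \qquad \text{for every } \gamma > p .
\end{equation}
Note that $p>2$ always, and that \eqref{eq:plan-schatten} at the endpoint $\gamma=p$ is equivalent to the critical case $\beta=\tfrac q2$, which is false by \eqref{e:necessary}; so the argument must genuinely exploit the strict inequality $\gamma>p$.

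Towards \eqref{eq:plan-schatten}, the first step is a sharp dyadic Littlewood--Paley decomposition $T=\sum_{k\in\Z}T_k$ in the spatial frequency $|\xi|\sim 2^k$. Because $WT_k=(WT_k)Q_k$, where the $Q_k$ are the orthogonal annular projections, the frequency pieces have mutually orthogonal domains, which gives the \emph{exact} identity $\|WT\|_{\mathfrak S^\gamma}^\gamma=\sum_k\|WT_k\|_{\mathfrak S^\gamma}^\gamma$; it therefore remains to prove a frequency-localised bound $\|WT_k\|_{\mathfrak S^\gamma}\lesssim 2^{-\varepsilon(\gamma)|k|}\|W\|_{L^p_tL^2_x}$ with $\varepsilon(\gamma)>0$ for $\gamma>p$, and then sum the geometric series in $k$. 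The ingredients I would combine are: (a) the classical estimate \eqref{e:classicalSwaveboundary} for the frequency-localised propagator together with H\"older's inequality in the relation $\tfrac12=\tfrac1q+\tfrac1p$, giving the operator-norm bound $\|WT_k\|_{\mathfrak S^\infty}\lesssim\|W\|_{L^p_tL^2_x}$ with constant independent of $k$ by scaling; (b) the subcritical $(q_0,\infty)$-Strichartz estimates, valid for all $q_0\in(q_d,\infty)$, which at the frequency-localised level sharpen (a) to a \emph{gain} $\|WT_k\|_{\mathfrak S^\infty}\lesssim 2^{-k(1/q_0-1/q)}\|W\|_{L^{p_0}_tL^2_x}$ for $k\ge0$ (and the obvious counterpart for $k\le0$), at the cost of the larger integrability exponent $p_0=\tfrac{2q_0}{q_0-2}>p$; and (c) a Hilbert--Schmidt estimate for $WT_k$, obtained after localising $t$ at the natural scale $2^{-k}$ (or, where $d=2$ forces Schatten exponents below $2$, a suitable small-exponent substitute). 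Interpolating (a)--(c) --- this is precisely the point at which an analytic family of operators is convenient, since the regularity exponent changes along the interpolation, exactly as in the argument of \cite{FS_AJM} --- and choosing the auxiliary parameters in terms of $\gamma$, the exponent arithmetic closes (thanks to $\gamma-p>0$) so as to return the genuine weight norm $\|W\|_{L^p_tL^2_x}$ together with a positive power gain in $|k|$.

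The main obstacle, characteristic of the maximal-in-space endpoint $r=\infty$, is ingredient (c). In contrast with the cases $r<\infty$ or with the Schr\"odinger propagator, the homogeneous symbol $|\xi|^{-2s}=|\xi|^{-(d-2/q)}$ underlying $TT^*$ is never square-integrable on $\R^d$, and the fixed-time kernel of the frequency-localised wave propagator does not decay fast enough to compensate, so one cannot simply insert the light-cone kernel into an $L^2$-type estimate (a naive Stein--Tomas/Hilbert--Schmidt computation diverges). The kernel of $TT^*$ is most singular on the light cone $|x|=|t|$, and the frequency-localised Schatten bound has to be proved by marrying the dispersive decay of the propagator with the $L^2$ (Schatten) orthogonality in a single argument, carried out after the dyadic frequency decomposition together with the time-localisation at scale $2^{-k}$, with careful bookkeeping of the interplay between the frequency scale, the time-localisation scale and the Schatten exponent; for $d=2$ this must moreover be done in a range of Schatten exponents below $2$, which is the most delicate regime. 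A final, more routine point is to set up the complex interpolation with the weight power and the regularity exponent varying simultaneously, so that the resulting weight norm is exactly $L^p_tL^2_x$ rather than an uncontrolled combination of mixed norms.
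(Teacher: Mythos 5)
Your set-up is fine and matches the paper's starting point: the Frank--Sabin duality principle, the factorisation $W\,TT^*\,\overline W=(WT)(WT)^*$, and the reduction to Schatten bounds for $WT$ with $\gamma=2\beta'>p=\widetilde q$ (the paper works with the truncated propagator $\chi(|D|)U$ and the dual form \eqref{est_main_Dual}). The proposal breaks down, however, at the summation over frequencies, in two ways. First, the claimed identity $\|WT\|_{\C^\gamma}^\gamma=\sum_k\|WT_k\|_{\C^\gamma}^\gamma$ is false: the pieces $WT_k$ have (essentially) orthogonal initial spaces, but multiplication by $W$ destroys orthogonality of their ranges, and one-sided orthogonality yields at best an $\ell^2$-type bound, $\|WT\|_{\C^\gamma}^2=\|W\,TT^*\,\overline W\|_{\C^{\gamma/2}}\lesssim\sum_k\|WT_k\|_{\C^\gamma}^2$ for $\gamma\geq2$, up to the usual finite overlap of the $P_k$. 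Second, and fatally, the frequency-localised estimate you intend to sum, $\|WT_k\|_{\C^\gamma}\lesssim 2^{-\varepsilon(\gamma)|k|}\|W\|_{L^p_tL^2_x}$ with $\varepsilon(\gamma)>0$, cannot hold for any $\gamma$: with $s=\tfrac d2-\tfrac1q$ and $\tfrac1p=\tfrac12-\tfrac1q$, conjugating with $L^2$-normalised dilations carries $T_k$ to $T_0$, leaves Schatten norms unchanged, and rescales $W$ within the same critical norm, so all powers of $2^k$ cancel exactly and $\sup_W\|WT_k\|_{\C^\gamma}/\|W\|_{L^p_tL^2_x}$ is independent of $k$. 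Thus no per-frequency geometric gain is available against the critical weight norm, no matter how the subcriticality $\gamma>p$ is exploited at fixed $k$, and the series you want to sum has constant terms; your final claim of an interpolation "returning the genuine weight norm $L^p_tL^2_x$ together with a positive power gain in $|k|$" is self-contradictory for the same scaling reason. (Incidentally, \eqref{e:necessary} does not rule out the critical case $\beta=\tfrac q2$; the paper states it as open, with Theorem \ref{thm:WRest} as a restricted weak-type substitute.)

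This summation is exactly where the real work lies, and where the paper (following \cite{BKS}) uses heavier machinery than a geometric series: frequency-localised $\C^1$ bounds coming from orthogonality/Bessel (\eqref{i:C1 loc}, dual to \eqref{i:loc bottom}) and $\C^2$ bounds from the dispersive estimate with decay $(1+|t|)^{-d/2}$ (\eqref{i:C2 loc}), which carry powers of $2^k$ of \emph{opposite} sign with \emph{different} time exponents, are combined by a Keel--Tao-type bilinear interpolation in $k$, yielding the Lorentz-lossy estimate \eqref{est:Lowerhalf}; this loss is then offset by bilinear Stein analytic interpolation in the complex power $|D|^{i\kappa}$ against the Lorentz-improved global estimates \eqref{i:C2} and \eqref{i:Cinfty}. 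The room $\beta'>\tfrac{\widetilde q}{2}$ is consumed in these interpolation steps, not at fixed frequency. Your ingredients (a)--(c) are sensible analogues of these building blocks (your (c) is essentially the $\C^1$/Bessel estimate in disguise), but without a mechanism of this type --- Lorentz refinements plus bilinear/analytic interpolation, or the Bourgain summation trick, which at criticality only gives restricted weak-type as in Theorem \ref{thm:WRest} --- the argument does not close.
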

Although it is unclear to use whether one can extend Theorem \ref{thm:main_strong} to the critical case $\beta = \frac{q}{2}$, we are at least able to show the following restricted weak-type estimates.
\begin{theorem}
\label{thm:WRest}
For any $d\geq 3$ and $q \in (q_d,\infty)$, the estimate
\begin{equation}\label{i:weak-restricted}
\Bigl\|\sum_j\lambda_j|U f_j|^2\Bigr\|_{L_t^{\frac q2,\infty}L_x^\infty}
\lesssim 
\|\lambda\|_{\ell^{\frac q2,1}}
\end{equation}
holds whenever $(f_j)_j$ is a family of orthonormal functions in $\dot{H}^{\frac{d}{2}-\frac{1}{q}}(\mathbb{R}^d)$.
\end{theorem}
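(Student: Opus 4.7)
The first step is to \emph{dualize}. By the standard correspondence between orthonormal-Strichartz estimates and Schatten-norm bounds (as used in \cite{FS_AJM, BLN_Forum}), the inequality \eqref{i:weak-restricted} is equivalent to the weak-Schatten operator bound
\[
\bigl\| (-\Delta)^{-s/2}\, U^*\, M_V\, U\, (-\Delta)^{-s/2} \bigr\|_{\mathfrak{S}^{(q/2)',\infty}} \lesssim \|V\|_{L^{(q/2)',1}_t L^1_x},
\]
where $s=\tfrac{d}{2}-\tfrac{1}{q}$, $M_V$ denotes multiplication by $V(t,x)$, and $\mathfrak{S}^{(q/2)',\infty}$ is the weak-Schatten class on $L^2(\R^d)$. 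The duality uses that $(\ell^{q/2,1})^* = \ell^{(q/2)',\infty}$ at the sequence level and $(L^{q/2,\infty}_t L^\infty_x)^* \supset L^{(q/2)',1}_t L^1_x$. This reformulation turns a seemingly nonlinear inequality into a \emph{linear} operator bound in $V$, so real-interpolation methods become available.

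Since Theorem \ref{thm:main_strong} at a single $q$ produces only strong-Schatten bounds for $\beta<q/2$ (with constants that in general degenerate as $\beta\to q/2$), I would vary $q$ across nearby exponents. Pick $q_d<q_0<q<q_1<\infty$ and apply Theorem \ref{thm:main_strong} at $(q_0,\beta_0)$ and $(q_1,\beta_1)$ with $\beta_i<q_i/2$; after the duality above, these produce two strong-Schatten bounds, but for the \emph{different} operators $(-\Delta)^{-s_i/2} U^* M_V U (-\Delta)^{-s_i/2}$ with $s_i=\tfrac{d}{2}-\tfrac{1}{q_i}$. To reconcile the differing regularities, I would embed everything in the analytic family
\[
T_z V \;:=\; (-\Delta)^{-z/2}\, U^*\, M_V\, U\, (-\Delta)^{-z/2},\qquad \Re z\in[s_0,s_1],
\]
and apply a real-interpolation variant of Stein's interpolation theorem (as in the analytic family arguments flagged in Section \ref{section:strongtype}). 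Since real interpolation between two strong-type bounds naturally produces off-diagonal Lorentz-type estimates with input Lorentz parameter $1$ and output Lorentz parameter $\infty$, a judicious choice of $q_0,q_1,\beta_0,\beta_1$ places the interpolated point at $(q,q/2)$ and yields exactly the desired bound $L^{(q/2)',1}_t L^1_x\to \mathfrak{S}^{(q/2)',\infty}$. Reversing the duality then recovers \eqref{i:weak-restricted}.

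The main technical obstacle is the bookkeeping of constants. The implicit constants supplied by Theorem \ref{thm:main_strong} blow up as $\beta_i\to q_i/2$, and these must be balanced against the bracket widths $|q-q_i|$ so that the interpolated constant at the endpoint remains finite. Choosing $\beta_0,\beta_1$ as explicit functions of $q_0,q_1$ tailored to the rate of blow-up, and exploiting the Lorentz refinement inherent in the real interpolation functor, should effect the required cancellation; in effect one trades a logarithmic degeneration on the strong side for the Lorentz gap between $\mathfrak{S}^{(q/2)'}$ and $\mathfrak{S}^{(q/2)',\infty}$. The restriction to $d\geq 3$ in the statement plausibly reflects the need for enough room in the admissible range $(q_d,\infty)=(2,\infty)$ to bracket $q$ with usable constants; in $d=2$ the narrower range $q>4$ together with the boundary degeneration near $q=4$ makes the same scheme substantially more delicate.
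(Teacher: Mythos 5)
The central step of your scheme does not work. You propose to reach the critical exponent $\beta=\tfrac q2$ by interpolating (by a real or analytic-family functor) the strictly sub-critical strong-type bounds of Theorem \ref{thm:main_strong} at nearby exponents $q_0<q<q_1$ with $\beta_i<\tfrac{q_i}{2}$. But whatever interpolation method is used, the output exponent satisfies $\tfrac1\beta=\tfrac{1-\theta}{\beta_0}+\tfrac{\theta}{\beta_1}>2\bigl(\tfrac{1-\theta}{q_0}+\tfrac{\theta}{q_1}\bigr)=\tfrac2q$, i.e.\ the interpolated estimate again lies strictly inside the region $\beta<\tfrac q2$: the indices of the interpolation space are determined by the indices of the two inputs, not by their constants, so no ``balancing of constants against bracket widths'' can push the conclusion onto the boundary of the open region where the inputs hold. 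What knowledge of the blow-up rate does give is only a crude extrapolation: taking $\lambda=\mathbf{1}_E$ in Theorem \ref{thm:main_strong} and optimizing $\beta$ in terms of $\#E$ yields at best $(\#E)^{2/q}$ times a power of $\log(\#E)$, which is strictly weaker than \eqref{i:weak-restricted}. (In addition, dualizing a weak-type $L^{q/2,\infty}_tL^\infty_x$ estimate into a ``weak Schatten'' bound is not covered by \cite[Lemma 3]{FS_AJM}, which is formulated for strong-type norms; in the actual proof duality is invoked only at a strong-type, frequency-localized level.)

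What is genuinely needed, and what your proposal lacks, is extra structure that makes the endpoint accessible. The paper works with Littlewood--Paley pieces: the Bessel-type bound \eqref{i:loc bottom} gives an $L^\infty_tL^\infty_x$ estimate with $\ell^\infty$ norm and one exponential behaviour in $2^k$, while for $d\geq4$ the frequency-localized classical endpoint estimate yields \eqref{i:loc top}, an $L^1_tL^\infty_x$ bound with $\ell^1$ norm and the opposite exponential behaviour; Bourgain's summation trick (Lemma \ref{l:Bourgain's trick}, extended there to $q_1=\infty$) converts exactly this pair of localized estimates into the restricted weak-type bound \eqref{i:weak-restricted} at $\beta=\tfrac q2$. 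This summation device, not interpolation of global estimates, is what reaches the critical exponent. Moreover, for $d=3$ the localized $L^2_tL^\infty_x$ input is false (Montgomery--Smith), and the paper substitutes \eqref{i:loc top_3d}, proved by a separate argument: a Schatten $\mathcal{C}^{2^m}$ bound for $WS\overline W$ obtained from iterated kernel estimates, the localized dispersive bound \eqref{e:dispersive_W_local}, and Barthe's characterization of finiteness of rank-one Brascamp--Lieb constants (Lemma \ref{l:Barthe}). Your proposal has no counterpart to either ingredient, and the restriction $d\geq3$ stems from precisely this issue rather than from the ``room to bracket $q$'' you suggest.
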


The rest of the paper is devoted to proving Theorems \ref{thm:main_strong} and \ref{thm:WRest}, and this is taken up in Sections \ref{section:strongtype} and \ref{section:RWT}, respectively. Prior to that, we introduce a small amount of notation.

\subsection*{Notation}

For non-negative quantities $A$, $B$, the notation $A\lesssim B$ (or $B \gtrsim A$) denotes an estimate $A \leq CB$ for some constant $C > 0$, and we write $A \sim B$ when both $A \lesssim B$ and $B\lesssim A$.

Let $\varphi\in C_0^\infty$ be supported in the annulus $\{\xi\in\mathbb R^n:2^{-1}<|\xi|<2\}$ and satisfy
\[
\sum_{k \in \Z} \varphi(2^{-k} \xi) = 1, \qquad \xi \in \R^d \setminus \{0\}.
\]
Define the family of the Littlewood--Paley operators $(P_k)_{k \in \Z}$ by $\widehat{P_k f}(\xi)=\varphi_k(\xi)\widehat{f}(\xi)$, where $\varphi_k(\xi)=\varphi(2^{-k}\xi)$ for each $k\in \mathbb Z$. Also, let $\chi \in C_0^\infty$ be supported in the ball $\{\xi\in\mathbb R^n: |\xi|<1\}$ with $\chi(0) = 1$.

For $1 \leq p< \infty$, $1\leq q \leq \infty$, we write $L^{p,q}(\mathbb R^d)$ for the associated Lorentz function space and $\ell^{p,q}$ for the associated Lorentz sequence space. We refer the reader to \cite{Grafakos, SteinWeiss} for the definition and wider discussion on these space. In particular, it is well known that
\begin{align*}
    L^{p,p}(\R^d) = L^p(\R^d), \quad L^{p,q_1}(\R^d) \subseteq L^{p,q_2}(\R^d) \quad \text{when } q_1 \leq q_2.
\end{align*}

\section{A sketch proof of Theorem \ref{thm:main_strong}} \label{section:strongtype}
In \cite{BKS}, the analogue of \eqref{est:main_rinfty} for the fractional Schr\"odinger propagators has been obtained.
\begin{theorem}[see \cite{BKS}]\label{thm:main_BKS}
Let $a \in (0,\infty) \setminus \{1\}$ and $d \geq 1$. For $q \in (q_{d+1},\infty)$ and $\beta<\frac{q}{2}$, the estimate
\begin{equation}
\label{est:rinfty_Sch}
\bigg\| \sum_j \lambda_j |e^{it (-\Delta)^\frac a2} f_j|^2 \bigg\|_{L^{\frac{q}{2}}_tL^\infty_x} \lesssim \| \lambda \|_{\ell^\beta}
\end{equation}
holds whenever $(f_j)_j$ is a family of orthonormal functions in $\dot{H}^{\frac{d}{2} - \frac{a}{q}}(\mathbb{R}^d)$. 
\end{theorem}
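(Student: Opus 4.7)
My plan is to pass to the dual Schatten-class formulation of the estimate and then apply Stein's complex interpolation on an analytic family of operators. Setting $s = \tfrac{d}{2}-\tfrac{a}{q}$ and writing $U := e^{it(-\Delta)^{a/2}}(-\Delta)^{-s/2}$, the Frank--Sabin duality shows that \eqref{est:rinfty_Sch} at exponent $\beta < q/2$ is equivalent to the multiplier Schatten estimate
\begin{equation*}
\bigl\| V\, U \bigr\|_{\mathfrak{S}^{2\beta'}(L^2_x \to L^2_{t,x})}^{2} \lesssim \bigl\| V \bigr\|_{L^{2q/(q-2)}_t L^2_x}^{2},
\end{equation*}
where $V(t,x)\geq 0$ is a scalar-valued multiplier and $V^2$ plays the role of the dual test function in $L^{(q/2)'}_t L^1_x$.

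To prove this dual estimate, I would embed $V U$ into an analytic family
\[
T_z := \Phi(z)\, V\, e^{it(-\Delta)^{a/2}} (-\Delta)^{-z/2}
\]
for $z$ in a strip, with $\Phi(z)$ an entire prefactor chosen to control growth at the edges. At one edge, the operator absorbs enough smoothing that the classical Strichartz estimate $\|e^{it(-\Delta)^{a/2}}(-\Delta)^{-s/2}\|_{L^2_x \to L^q_t L^\infty_x} \lesssim 1$---valid exactly on $q \in (q_{d+1}, \infty)$, which pins down the range of $q$ in the theorem---combines with H\"older in time to give an $\mathfrak{S}^\infty$ (operator-norm) bound for $T_z$. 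At the other edge, I would establish a Hilbert--Schmidt ($\mathfrak{S}^2$) bound for $T_z$ by expressing the Schwartz kernel of $e^{it(-\Delta)^{a/2}} (-\Delta)^{-z/2}$ as an oscillatory integral and applying stationary phase, exploiting the fact that for $a\neq 1$ the characteristic surface $\{\tau = |\xi|^a\}$ has non-vanishing Gaussian curvature (after Littlewood--Paley localization), so that the kernel enjoys sharp decay in $t$. Stein's theorem applied to the holomorphic family $T_z$ then yields the $\mathfrak{S}^{2\beta'}$ bound for every $\beta < q/2$; the strict inequality reflects the usual loss at the boundary of the interpolation strip.

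The main obstacle I expect is the Hilbert--Schmidt endpoint. One must perform a careful Littlewood--Paley decomposition of the kernel, apply stationary phase uniformly in the dyadic scale and in $|\Im z|$, and sum the dyadic contributions against $\|V\|_{L^{2q/(q-2)}_t L^2_x}^{2}$ after integrating against $V^2$. It is precisely the non-degeneracy of the stationary-phase analysis for $a\neq 1$ that permits this---for the wave equation $a = 1$, the flat radial direction of the characteristic cone yields degenerate phase and no comparable Hilbert--Schmidt bound is available. This dichotomy is exactly why Theorem \ref{thm:main_strong} cannot be deduced from the present theorem and requires the separate argument of Section \ref{section:strongtype}.
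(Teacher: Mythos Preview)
There are two genuine gaps. First, the Hilbert--Schmidt endpoint you describe does not function as stated. The $\mathfrak{S}^2$ norm of $T_z = V e^{it(-\Delta)^{a/2}}(-\Delta)^{-z/2}$ is
\[
\|T_z\|_{\mathfrak{S}^2}^2 = \int |V(t,x)|^2 \|K_z(t,\cdot)\|_{L^2_x}^2\,\mathrm{d}t\,\mathrm{d}x,
\]
and by Plancherel $\|K_z(t,\cdot)\|_{L^2_x}^2 = c_d\int |\xi|^{-2\Re z}\,\mathrm{d}\xi$ is a $t$-independent constant, infinite without a frequency cut-off; stationary phase is simply not relevant to this quantity. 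What the dispersive decay \emph{does} control is the $\C^2$ norm of the $TT^*$ operator $W U_z U_z^* \overline W$, whose kernel involves $K_z(t_1-t_2,x_1-x_2)$ and is bounded via Hardy--Littlewood--Sobolev in time. That is the estimate \eqref{i:C2}, and it is only available for $\widetilde q_0\in(2,4)$. Interpolating this $\C^2$ bound against the $\C^\infty$ bound coming from the single-function Strichartz estimate yields \eqref{est_main_Dual} only for $\beta'\geq 2$, i.e.\ $\beta\leq 2$.

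Second, for $q>4$ the target range includes $\beta\in(2,q/2)$, which cannot be reached by any $\C^2$--$\C^\infty$ interpolation. The argument in \cite{BKS} (sketched in Section~\ref{section:strongtype} as Case~2) fills this gap with a genuinely different mechanism: one proves the frequency-localised $\C^1$ and $\C^2$ bounds \eqref{i:C1 loc}--\eqref{i:C2 loc} and sums the dyadic pieces via a Keel--Tao style bilinear real-interpolation argument to obtain \eqref{est:Lowerhalf}, then interpolates once more against \eqref{i:C2} to recover the correct Lebesgue norm on $W$. Your outline mentions Littlewood--Paley decomposition but gives no indication of how to sum the pieces, and this is where most of the work lies. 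Finally, your closing remark about $a=1$ is not correct: the wave propagator admits exactly the same $\C^2$ bound \eqref{i:C2} via the dispersive estimate \eqref{e:dispersive_W}, and the paper states explicitly that Theorem~\ref{thm:main_strong} is proved by the same method as Theorem~\ref{thm:main_BKS} with only minor modifications---the distinction between $a=1$ and $a\neq 1$ lies in the admissible range of $q$, not in the structure of the proof.
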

One may follow the argument in \cite{BKS} to prove Theorem \ref{thm:main_strong} with very minor modifications, and so we only present a very brief sketch here and refer the reader to \cite{BKS} for further details.

Key to the argument is the dispersive estimate of the integral kernel associated with the wave propagator:
\begin{equation}
\label{e:dispersive_W}
    \Bigl|
\int e^{i(x \cdot\xi+t|\xi|)}|\xi|^{-d+\frac{2}{q}+ i \kappa} \chi(\xi) \,\d\xi
\Bigr|
\lesssim
C(\kappa)
|t|^{-\frac2q},
\end{equation}
where $d\geq2$, $\frac{4}{d-1}< q<\infty$, $\kappa\in\mathbb R$, and $C(\kappa)>0$ satisfies $C(\kappa) \lesssim e^{\varepsilon |\kappa|}$ for an arbitrary small $\varepsilon >0$. The corresponding estimate for the fractional Schr\"odinger propagator
\begin{equation}
\label{e:dispersive_S}
    \Bigl|
\int e^{i(x \cdot\xi+t|\xi|^a)}|\xi|^{-d+\frac{2a}{q}+ i \kappa} \chi(\xi) \,\d\xi
\Bigr|
\lesssim
C(\kappa)
|t|^{-\frac2q}
\end{equation}
holds for $d\geq1$,  $a\in(0,\infty)\backslash\{1\}$, $\frac{4}{d}\leq q<\infty$, $\kappa\in\mathbb R$, and $C(\kappa)>0$ as above (for a proof of \eqref{e:dispersive_W} and \eqref{e:dispersive_S}, see for example \cite{GPW} and \cite{GLNY}). Notice that the condition on $q$ in \eqref{e:dispersive_W} corresponds to the one for \eqref{e:dispersive_S}, but $d$ is replaced by $d-1$ and the endpoint $q=\frac{4}{d-1}$ is excluded; the lack of an endpoint estimate here turns out to be harmless since our goal is $\beta<\frac q2$. 

The argument proceeds using a duality principle observed by 
Frank--Sabin \cite[Lemma~3]{FS_AJM} in an abstract setting. In our case, we see that
\eqref{est:main_rinfty} with $U$ replaced by $\chi(|D|)U$ is equivalent to
\begin{equation}
    \label{est_main_Dual}
    \|WU\chi^2(|D|)|D|^{-(d-1+\frac{2}{\widetilde{q}})}U^*\overline{W}\|_{\C^{\beta'}}
\lesssim 
\|W\|_{L_t^{\widetilde{q}}L_x^{2}}^2,
\end{equation}
where $\beta'$ denotes the (standard) conjugate of $\beta$, and $\widetilde{q}$ denotes the (half-)conjugate of $q$ given by
\[
\frac1\beta+\frac{1}{\beta'}=1, \qquad \frac{1}{q}+\frac{1}{\widetilde{q}}=\frac12.
\]
Here, $\C^{\beta'}$ with $\beta' \geq 1$ denotes the Schatten space on $L^2(\mathbb R^{d+1})$ defined as
\[
\C^{\beta'}=\{T \in \mathrm{Com}(L^2(\R^{d+1})) : \|T\|_{\C^{\beta'}} := \|(\mu_j(T))_{j\in \N} \|_{\ell^{\beta'}} < \infty\},
\]
where $\mathrm{Com}(L^2(\R^{d+1}))$ denotes the set of compact operators on $L^2(\R^{d+1})$ and $(\mu_j(T))_j$ denotes the family of the singular values of $T$ (i.e. non-zero eigenvalues of $\sqrt{T^*T}$). Note that establishing \eqref{est:main_rinfty} with $\chi(|D|)U$ suffices since one can rescale to deduce the same estimate with $\chi(\varepsilon|D|)U$ for any $\varepsilon > 0$, and then take a limit $\varepsilon \to 0$.

The space $\C^1$ consists of trace-class operators and $\C^\infty$ consists of compact operators (endowed with the standard operator norm). 
Hence $\C^{\beta'}$ with $1<\beta' < \infty$ is regarded as an intermediate space. Of particular importance to the analysis in this note is that the class $\C^2$ consists of Hilbert--Schmidt operators and that there is a very useful expression of the $\C^2$ norm; if $T$ is given 
\[
T f (x) = \int_{\R^{d+1}} K(x,y)f(y) \, \mathrm{d}y, \qquad f \in L^2(\R^{d+1}),
\]
then $\|T\|_{\C^2} = \|K\|_{L^2(\mathbb R^{d+1}\times\mathbb R^{d+1})}$. 
For more details about Schatten spaces, see Simon's textbook \cite{Simon}.

Our goal is to prove \eqref{est_main_Dual} for $\widetilde{q} \in (2,\widetilde{q}_d)$ and $\beta'>\frac{\widetilde{q}}{2}$. 
We write $\mathcal{U} = \chi(|D|)U$ and divide into two cases depending on $\widetilde{q}$.

\textbf{Case 1: $d \geq 3$ and $4 \leq \widetilde{q} <\infty$:}
We aim to get the following $\C^{2}$ and $\C^\infty$ bilinear estimates: if $2 < \widetilde{q_0} < 4$, $2 < \widetilde{q_1} < \infty$,
\begin{align}\label{i:C2}
& \|W_1\mathcal{U}|D|^{-(d-1+\frac{2}{\widetilde{q_0}})+i\kappa}\mathcal{U}^*\overline{W}_2\|_{\C^2}
\lesssim 
C(\kappa) 
\|W_1\|_{L_t^{\widetilde{q_0},4}L_x^2}\|W_2\|_{L_t^{\widetilde{q_0},4}L_x^2},\\
\label{i:Cinfty}
& \|W_1\mathcal{U}|D|^{-(d-1+\frac{2}{\widetilde{q_1}})+i\kappa}\mathcal{U}^*\overline{W}_2\|_{\C^\infty}
\lesssim 
C(\kappa) 
\|W_1\|_{L_t^{\widetilde{q_1},\infty}L_x^2}\|W_2\|_{L_t^{\widetilde{q_1},\infty}L_x^2},
\end{align}
where $C(\kappa) \lesssim e^{\varepsilon |\kappa|}$ for an arbitrary small $\varepsilon >0$. 
Then, by applying a bilinear version of Stein's analytic interpolation to the above estimates, we may obtain \eqref{est_main_Dual} for the case $4 \leq \widetilde{q} <\infty$. For this purpose, the complex power $i \kappa$ is essential in \eqref{i:C2} and \eqref{i:Cinfty}. 
One may also notice that, thanks to the Lorentz improvements on the right-hand sides of \eqref{i:C2} and \eqref{i:Cinfty}, the outcome is also slightly stronger than \eqref{est_main_Dual}, namely, in Case 1 and for $\beta'>\frac{\widetilde{q}}{2}$ one obtains
\[
\|W\mathcal{U}|D|^{-(d-1+\frac{2}{\widetilde{q}})}\mathcal{U}^*\overline{W}\|_{\C^{\beta'}}
\lesssim 
\|W\|_{L_t^{\widetilde{q}, 2\beta'}L_x^{2}}^2.
\]
The proofs of \eqref{i:C2} and \eqref{i:Cinfty} are based on the dispersive estimate for the wave propagator \eqref{e:dispersive_W} and the Lorentz refinement of the Hardy--Littlewood--Sobolev inequality due to O'Neil \cite{ONeil}. 
The interested reader is encouraged to visit \cite{BKS} where further details regarding the bilinear analytic interpolation of \eqref{i:C2} and \eqref{i:Cinfty} can also be found.

\textbf{Case 2: $d \geq 2$ and $2 < \widetilde{q} < 4$:}
We approach this case by first establishing the somewhat weaker estimates
\begin{equation}
    \label{est:Lowerhalf}
    \|W_1\mathcal{U}|D|^{-(d-1+\frac{2}{\widetilde{q}})+i\kappa}\mathcal{U}^*\overline{W}_2\|_{\C^{\beta'}}
\lesssim 
C(\kappa) 
\|W_1\|_{L_t^{\widetilde{q},2}L_x^2}\|W_2\|_{L_t^{\widetilde{q},2}L_x^2},
\end{equation}
where $\beta'>\frac{\widetilde{q}}{2}$ and $C(\kappa) \lesssim e^{\varepsilon |\kappa|}$ for an arbitrary small $\varepsilon >0$. In fact, we can offset the loss in \eqref{est:Lowerhalf} (in the sense of $L_t^{\widetilde{q},2} \subsetneq L_t^{\widetilde{q}}$ when $\widetilde{q} > 2$) by capitalizing on the gain in \eqref{i:C2} (in the sense of $L_t^{\widetilde{q_0}} \subsetneq L_t^{\widetilde{q_0},4}$ if $\widetilde{q_0}< 4$), and use bilinear analytic interpolation once again to obtain the desired estimate \eqref{est_main_Dual} in Case 2 (see \cite{BKS} for further details of this step).

To prove \eqref{est:Lowerhalf}, we use a frequency-localization argument and use a different bilinear interpolation argument in this spirit of Keel--Tao \cite{KeelTao} based on the estimates
\begin{align}\label{i:C1 loc}
    \|W_1\mathcal{U}P_k|D|^{-2s+i\kappa}P_k\mathcal{U}^*\overline{W}_2\|_{\C^1}
    &\lesssim
    (2^k)^{d-2s}
    \|W_1\|_{L_t^{2}L_x^2}\|W_2\|_{L_t^{2}L_x^2},\\
    \label{i:C2 loc}
    \|W_1\mathcal{U}P_k|D|^{-2s+i\kappa}P_k\mathcal{U}^*\overline{W}_2\|_{\C^2}
    &\lesssim
    C(\kappa)
    (2^k)^{d-2s-1+\frac{1}{\widetilde{q_1}}+\frac{1}{\widetilde{q_2}}}
    \|W_1\|_{L_t^{\widetilde{q_1}}L_x^2}\|W_2\|_{L_t^{\widetilde{q_2}}L_x^2},
\end{align}
where $\widetilde{q_1},\widetilde{q_2}\in[2,\infty)$ are such that $\frac{1}{\widetilde{q_1}}+\frac{1}{\widetilde{q_2}}>\frac12$, and $C(\kappa) \lesssim e^{\varepsilon |\kappa|}$ for an arbitrary small $\varepsilon >0$. 
As in Case 1, \eqref{i:C2 loc} readily follows from a stronger form of the dispersive estimate \eqref{e:dispersive_W} with decay $(1 + |t|)^{-d/2}$. 
The estimate \eqref{i:C1 loc} is a manifestation of the frequency localization\footnote{Note that \eqref{i:C1 loc} without the frequency localization is not permitted due to the failure of the associated Sobolev embedding.} and Bessel's inequality for orthonormal sequences, and for its proof, it is convenient to make use of the representation
\[
\| T\|_{\mathcal{C}^{\beta'}} = \sup_{(\phi, \psi) \in \mathcal{B}} \|\langle  T \phi_j,   \psi_j \rangle_{L^2(\R^{d+1})} \|_{\ell^{\beta'}}
\]
for Schatten norms (see \cite[Proposition~2.6]{Simon}). Here, $\mathcal{B}$ denotes the set of pairs of orthonormal sequences in $L^2(\R^{d+1})$. 

\section{Proof of Theorem \ref{thm:WRest}} \label{section:RWT}
Key to our proof of \eqref{i:weak-restricted} is a clever summation idea going back to Bourgain \cite{Bourgain} (which has been used in several papers; see also \cite[Section 6.2]{CSWW} and \cite[Lemma 2.3]{Lee03} for example) which allows us to pass from frequency-local to frequency-global estimates.
\begin{lemma}[see \cite{BHLNS}]\label{l:Bourgain's trick}
Let $q_0, q_1,r \in [2,\infty]$, $\beta_0, \beta_1 \in [2,\infty]$ and $(g_j)_j$ be a uniformly bounded sequence in $L_t^{q_0}L_x^{r}\cap L_t^{q_1}L_x^{r}$. Suppose there exist $\varepsilon_0$, $\varepsilon_1>0$ such that 
\[
\Big\|\sum_j\lambda_j|P_kg_j|^2\Big\|_{L_t^{\frac{q_0}{2},\infty}L_x^{\frac{r}{2}}}\lesssim 2^{-\varepsilon_0k}\|\lambda\|_{\ell^{\beta_0}}
\]
and
\[
\Big\|\sum_j\lambda_j|P_kg_j|^2\Big\|_{L_t^{\frac{q_1}{2}, \infty}L_x^{\frac{r}{2}}}\lesssim 2^{\varepsilon_1k}\|\lambda\|_{\ell^{\beta_1}}
\]
for all $k\in\mathbb{Z}$, then
\[
\Big\|\sum_j\lambda_j|g_j|^2\Big\|_{L_t^{\frac{q}{2},\infty}L_x^{\frac{r}{2}}}\lesssim \|\lambda\|_{\ell^{\beta,1}},
\]
where $\frac{1}{q}=\frac{1-\theta}{q_0}+\frac{\theta}{q_1}$, $\frac{1}{\beta}=\frac{1-\theta}{\beta_0}+\frac{\theta}{\beta_1}$ and $\theta=\frac{\varepsilon_0}{\varepsilon_0+\varepsilon_1}$.
\end{lemma}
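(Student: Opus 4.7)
The proof follows the strategy used in \cite{BHLNS} and combines a Littlewood--Paley decomposition with a summation trick in the spirit of Bourgain \cite{Bourgain}.

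The plan begins with a standard real-interpolation reduction. Decomposing $\lambda$ into its dyadic levels and using the Marcinkiewicz principle, it suffices to prove the restricted weak-type inequality
\[
\Bigl\|\sum_{j\in E}|g_j|^2\Bigr\|_{L_t^{q/2,\infty}L_x^{r/2}}\lesssim|E|^{1/\beta}
\]
for every finite $E\subset\N$; recombining the level sets then produces exactly the Lorentz-sequence norm $\|\lambda\|_{\ell^{\beta,1}}$ on the right.

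Next, I would write $g_j=\sum_{k\in\Z}P_kg_j$, which converges in $L_t^{q_0}L_x^r\cap L_t^{q_1}L_x^r$ by the uniform boundedness hypothesis. Viewing $(P_kg_j(t,x))_{j\in E}$ as an $\ell_j^2(E)$-valued sequence in $k$ and using the triangle inequality in $\ell^2_j(E)$ pointwise in $(t,x)$ yields
\[
\sum_{j\in E}|g_j(t,x)|^2\le\Bigl(\sum_{k\in\Z}A_k(t,x)^{1/2}\Bigr)^2,\qquad A_k(t,x):=\sum_{j\in E}|P_kg_j(t,x)|^2.
\]
Taking the $L_x^{r/2}$-norm and invoking Minkowski's inequality in $L_x^r$ (valid since $r\geq 2$) reduces the task to establishing
\[
\Bigl\|\sum_{k\in\Z}h_k\Bigr\|_{L_t^{q,\infty}}\lesssim|E|^{1/(2\beta)},\qquad h_k(t):=\|A_k(t,\cdot)\|_{L_x^{r/2}}^{1/2},
\]
where the two hypotheses specialised to $\lambda=\mathbf 1_E$ provide the one-sided weak-type bounds $\|h_k\|_{L_t^{q_0,\infty}}\lesssim 2^{-\varepsilon_0k/2}|E|^{1/(2\beta_0)}$ and $\|h_k\|_{L_t^{q_1,\infty}}\lesssim 2^{\varepsilon_1k/2}|E|^{1/(2\beta_1)}$.

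To conclude, I would execute the Bourgain summation trick at the level of the distribution function: for each $\alpha>0$, the estimate $|\{t:\sum_kh_k(t)>\alpha\}|\le\sum_k|\{t:h_k(t)>\alpha_k\}|$ holds for any $\alpha_k>0$ with $\sum_k\alpha_k\le\alpha$. Choosing $\alpha_k=c\alpha\cdot 2^{-\eta|k-k^*|}$ for a small $\eta>0$ and a threshold $k^*=k^*(\alpha,|E|)$ at which the two weak-type bounds balance, I would apply the first hypothesis for $k\ge k^*$ (so that the decay $2^{-\varepsilon_0k}$ makes the tail geometrically summable) and the second for $k<k^*$. Both tails are comparable to their common value at $k^*$, and a direct calculation shows this evaluates to $(|E|^{1/(2\beta)}/\alpha)^q$ with the interpolated parameters $\frac{1}{q}=\frac{1-\theta}{q_0}+\frac{\theta}{q_1}$, $\frac{1}{\beta}=\frac{1-\theta}{\beta_0}+\frac{\theta}{\beta_1}$, $\theta=\frac{\varepsilon_0}{\varepsilon_0+\varepsilon_1}$, as required. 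The main technical obstacle is the absence of a genuine triangle inequality in the quasi-Banach space $L^{q,\infty}$, which prevents a direct summation of $\|h_k\|_{L^{q,\infty}}$ in $k$; the distribution-function argument with the geometric weighting $2^{-\eta|k-k^*|}$ (summing to $O(\eta^{-1})$ in $k$) circumvents this obstruction and is precisely what produces the Lorentz norm $\ell^{\beta,1}$ rather than $\ell^{\beta}$ on the right-hand side.
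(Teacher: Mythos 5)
Your argument is correct, and it rests on the same two pillars as the paper's proof: the reduction to indicator sequences $\lambda=\mathbf{1}_E$ (restricted weak type, recombined afterwards into the $\ell^{\beta,1}$ norm), and a frequency threshold chosen in terms of the level, with the $2^{-\varepsilon_0 k}$ hypothesis controlling high frequencies and the $2^{\varepsilon_1 k}$ hypothesis the low ones. The execution, however, is genuinely different. The paper (which only sketches the case $q_1=\infty$, deferring the general statement to \cite{BHLNS}) splits each $g_j$ into just two blocks $k\le M$ and $k\ge M+1$, applies Minkowski inside the mixed Lorentz norms, and chooses $M$ so large that the low-frequency superlevel set at height $\nu/2$ is empty; only the high-frequency block is then measured by Chebyshev. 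You instead reduce, via pointwise Minkowski in $\ell^2_j$ and in $L^r_x$, to the scalar functions $h_k(t)=\|\sum_{j\in E}|P_kg_j(t,\cdot)|^2\|_{L^{r/2}_x}^{1/2}$ with one-sided bounds in $L^{q_0,\infty}_t$ and $L^{q_1,\infty}_t$, and run the classical Bourgain summation on distribution functions with thresholds $\alpha_k=c\alpha 2^{-\eta|k-k^*|}$. This buys you the full lemma in one stroke, since finite $q_0,q_1$ are treated symmetrically: balancing the two measure bounds $\bigl(2^{-\varepsilon_0 k^*/2}|E|^{1/(2\beta_0)}/\alpha\bigr)^{q_0}$ and $\bigl(2^{\varepsilon_1 k^*/2}|E|^{1/(2\beta_1)}/\alpha\bigr)^{q_1}$ (so $k^*$ depends on $\alpha$, as you indicate) does evaluate to $\bigl(|E|^{1/(2\beta)}/\alpha\bigr)^{q}$ with exactly the stated $\theta$, $q$, $\beta$. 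Two small points should be made explicit: one needs $\eta<\tfrac12\min(\varepsilon_0,\varepsilon_1)$ and $c\sim\eta$ so that $\sum_k\alpha_k\le\alpha$ and both tails are geometric; and at the endpoint $q_1=\infty$ (the case actually used later in the paper) the ``balance'' prescription should be read as taking $k^*$ largest with $C2^{\varepsilon_1 k^*/2}|E|^{1/(2\beta_1)}\le\alpha_{k^*}$, which makes the sets $\{h_k>\alpha_k\}$ for $k<k^*$ empty --- precisely the paper's choice of $M$. Finally, a cosmetic remark: it is the restriction to indicators (and their recombination), rather than the distribution-function summation itself, that accounts for the $\ell^{\beta,1}$ norm in the conclusion.
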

The above lemma was proved in \cite[Proposition 2.2]{BHLNS}. Let us give a sketch of the argument when $q_1 = \infty$ (since this case does not immediately follow from the argument in \cite{BHLNS} as it stands).
 
\begin{proof}[Proof of Lemma \ref{l:Bourgain's trick} when $q_1 = \infty$]
It is enough to show the desired inequality for $\lambda = \textbf{1}_E$ (here, $\textbf{1}_E(j) = 0$ if $j \in E$, and zero otherwise), so that we aim for 
\begin{equation} \label{e:BourgainETS}
|J_\nu|
\lesssim 
\left(
\frac{(\# E)^\frac1\beta}{\nu}
\right)^\frac q2,
\end{equation}
where $\nu>0$ and
\[
J_\nu
:=
\Big\{t\in\mathbb R: \Big\|\sum_{j\in E}|g_j(t,\cdot)|^2\Big\|_{L_x^\frac r2}>\nu \Big\}.
\]
For a fixed $M\in\mathbb Z$ (chosen later), set 
\[
J_{\nu,M}^0
:=
\Big\{t\in\mathbb R: \Big\|\sum_{j\in E}|\sum_{k\geq M+1}P_k g_j(t,\cdot)|^2\Big\|_{L_x^\frac r2}>\frac\nu2 \Big\}
\]
and
\[
J_{\nu,M}^1
:=
\Big\{t\in\mathbb R: \Big\|\sum_{j\in E}|\sum_{k\leq M} P_k g_j(t,\cdot)|^2\Big\|_{L_x^\frac r2}>\frac\nu2 \Big\}.
\]
Regarding $J_{\nu,M}^0$, let us observe that 
\[
 \Big\|\sum_{j\in E}|\sum_{k \geq M+1}P_k g_j(t,\cdot)|^2\Big\|_{L_t^{\frac{q_0}{2}, \infty} L_x^\frac r2}
 \lesssim
 2^{-\varepsilon_0M}(\# E)^\frac{1}{\beta_0}.
\]
In fact, applying Minkowski's inequality twice gives
\[
 \Big\|\sum_{j\in E}|\sum_{k\geq M+1} P_k g_j(t,\cdot)|^2\Big\|_{L_t^{\frac{q_0}{2}, \infty} L_x^\frac r2}
 \lesssim
\Big(\sum_{k\geq M+1}\Big\|\sum_{j}|P_kg_j|^2\Big\|_{L_t^{\frac{q_0}{2}, \infty} L_x^{\frac r2}}^\frac12\Big)^2,
\]
which is further bounded by (up to some constant)
$
2^{-\varepsilon_0M} (\# E)^\frac{1}{\beta_0}.
$
Hence, one readily sees that 
\[
|J_{\nu,M}^0| 
\lesssim \nu^{-\frac{q_0}{2}} \Big\|\sum_{j\in E}|\sum_{k\geq M+1} P_k g_j(t,\cdot)|^2\Big\|_{L_t^{\frac{q_0}{2}, \infty} L_x^\frac r2}^{\frac{q_0}{2}}
\lesssim 2^{-\frac{\varepsilon_0Mq_0}{2}}\nu^{-\frac{q_0}{2}}(\# E)^\frac{q_0}{2\beta_0}.
\]
A similar calculation reveals that there exists $C>0$ such that
\[
\Big\|\sum_{j\in E}|\sum_{k\leq M} P_k g_j(t,\cdot)|^2\Big\|_{L_t^{\infty} L_x^\frac r2}
\leq 
C 2^{\varepsilon_1M}(\# E)^\frac{1}{\beta_1},
\]
and now we select $M$ to be the largest integer such that 
\[
C 2^{\varepsilon_1M}(\# E)^\frac{1}{\beta_1}
<
\frac\nu2.
\]
With this choice, we have $J_{\nu,M}^1 = \emptyset$ and
$
2^{M}
\sim
\nu^{\frac{1}{\varepsilon_1}} (\# E)^{-\frac{1}{\beta_1 \varepsilon_1}}.
$
Since $|J_\nu|\leq |J_{\nu,M}^0|$, we obtain \eqref{e:BourgainETS} after a straightforward computation.
\end{proof}

\begin{proof}[Proof of Theorem~\ref{thm:WRest}]
Let $(f_j)_{j \in \N}$ be a family of orthonormal functions in $L^2(\R^d)$. First note that
\begin{equation}\label{i:loc bottom}
\Big\|\sum_j\lambda_j|U |D|^{-s} P_kf_j|^2\Bigr\|_{L_t^\infty L_x^\infty}
\lesssim
(2^k)^{d-2s}
\|\lambda\|_{\ell^\infty}.
\end{equation}
This is the estimate dual to \eqref{i:C1 loc} (which can easily be verified directly by an application of Bessel's inequality as in \cite{BHLNS}).

First we consider the case $d\geq 4$ in which case it is known (see for example \cite[Theorem 1]{FangWang}) that the classical Strichartz estimate \eqref{e:classicalSwaveboundary} holds with $f$ replaced by $P_0f$. Thus, by recaling, 
\begin{equation}
\label{i:loc_top_single}
    \| U|D|^{-s}P_kf_j \|_{L_t^2L_x^\infty}
\lesssim
(2^k)^{\frac{d-1}{2}-s}
\end{equation}
holds for each $f_j$, and therefore the triangle inequality implies
\begin{equation}\label{i:loc top}
\Bigl\|\sum_j \lambda_j|U|D|^{-s}P_kf_j|^2\Bigr\|_{L_t^1L_x^\infty}
\lesssim
(2^k)^{d-1-2s}
\|\lambda\|_{\ell^1}.
\end{equation}
Although \eqref{i:loc bottom} and \eqref{i:loc_top_single} are valid for all $s \in \mathbb{R}$, if we fix $q\in(2,\infty)$ and apply these estimates with $s_q=\frac d2-\frac1q$, then Lemma \ref{l:Bourgain's trick} immediately gives the desired estimate 
\[
\Bigl\|\sum_j\lambda_j|U|D|^{-s_q}f_j|^2\Bigr\|_{L_t^{\frac q2,\infty}L_x^\infty}
\lesssim 
\|\lambda\|_{\ell^{\frac q2,1}}.
\]

The case when $d=3$ requires a bit more work to show \eqref{i:weak-restricted} for $2<q<\infty$ since \eqref{i:loc_top_single} with $d=3$ is known to be false (see \cite{MontSmith}). 
Thus, instead of \eqref{i:loc top}, we aim for
\begin{equation}\label{i:loc top_3d}
\Bigl\|\sum_j \lambda_j|U|D|^{-s}P_kf_j|^2\Bigr\|_{L_t^{\frac{q}{2}} L_x^\infty}
\lesssim
(2^k)^{3-\frac2q -2s}
\|\lambda\|_{\ell^{\frac{q}{2}}}
\end{equation}
with $q$ larger than but abritrarily close to $2$ (here the implicit constant blows up as $q \to 2$). It is clear that Lemma \ref{l:Bourgain's trick} combined with \eqref{i:loc bottom} and \eqref{i:loc top_3d} yields \eqref{i:weak-restricted}. 

Let us check \eqref{i:loc top_3d}. First, by rescaling, we may assume $k=0$. Using duality \cite[Lemma 3]{FS_AJM}, it suffices to show
\begin{equation}\label{est:WRest_Dual_3d2}
\|WS\overline{W}\|_{\C^{2^m}}
\lesssim 
\|W\|_{L_t^{2^{m+1}}L_x^{2}}^2,
\end{equation}
where $S:=U P_0 P_0^* U^*$ and $m$ a sufficiently large natural number. Recall that the $\C^{2}$-norm can be expressed by the $L^2$-norm of its integral kernel, and in addition, from the definition of singular values, it holds that $\| T \|_{\C^{2^{m+1}}}^2 = \| T^* T \|_{\C^{2^m}}$. 
Therefore, since $S$ is self-adjoint, we have
\[
\|WS\overline{W}\|_{\C^{2^m}}^{2^{m-1}} = \| (WS\overline{W})^{2^{m-1}}\|_{\C^{2}}.
\]
Hence our task is to get an $L^2$ bound for the integral kernel $\K_{2^{m-1}}$ of $(WS\overline{W})^{2^{m-1}}$. The notation for the variables in the forthcoming argument requires a little care and our choices will become clear as we proceed.

Let us write $\zeta_j=(t_j,x_j)\in \mathbb R^{1+3}$. Then
\[
(WS\overline{W})F(\zeta_1)
=
\int_{\mathbb R^{4}}F(\zeta_2)\K_1(\zeta_1,\zeta_2)\,\d\zeta_2
\]
with $\K_1(\zeta_1,\zeta_2)=W(\zeta_1)\overline{W}(\zeta_2)K(\zeta_1,\zeta_2)$ and 
\[
K(\zeta_1,\zeta_2)
=
\int_{\mathbb R^3}|\varphi(\xi)|^2 e^{i((x_1-x_2)\cdot\xi+(t_1-t_2)|\xi|)}\,\d\xi.
\]
To handle this, we shall invoke the following frequency-localized dispersive estimate (see e.g. \cite{GPW}):
\begin{equation}
\label{e:dispersive_W_local}
    \sup_{x_1,x_2} |K(\zeta_1,\zeta_2)| \lesssim \langle t_1-t_2 \rangle^{-1},
\end{equation}
where $\langle t \rangle := 1 + |t|$.

For $m = 2$ we have
\begin{align*}
(WS\overline{W})^2 G(\zeta_1)
=
\int_{\mathbb R^{4}}G(\zeta_3)\Big(\int_{\mathbb R^{4}}W(\zeta_1)|W(\zeta_2)|^2\overline{W}(\zeta_3)K(\zeta_1,\zeta_2)K(\zeta_2,\zeta_3)\,\d\zeta_2\Big)\d\zeta_3
\end{align*}
and from this we have an expression for $\K_2(\zeta_1,\zeta_3)$. Inductively, 
\begin{align*}
\K_{2^{m}}(\zeta_1,\zeta_{2^{m}+1})
&=
\int_{\mathbb R^{4}}\K_{2^{m-1}}(\zeta_1,\zeta_{2^{m-1}+1})\K_{2^{m-1}}(\zeta_{2^{m-1}+1},\zeta_{2^{m}+1})\,\d\zeta_{2^{m-1}+1}\\
&=
\int_{(\mathbb R^{4})^{2^m-1}}W(\zeta_1)\prod_{j=2}^{2^m}|W(\zeta_j)|^2\overline{W}(\zeta_{2^m+1})
\prod_{k=1}^{2^m}K(\zeta_k,\zeta_{k+1})\, \prod_{\ell = 2}^{2^m} \d\zeta_\ell
\end{align*}
for all $m \geq 1$, and consequently
\begin{align*}
|\K_{2^{m}}(\zeta_1,\zeta_{2^{m}+1})| \lesssim |W(\zeta_1)| \bigg( \int_{\mathbb R^{2^m-1}} \prod_{j=2}^{2^m} h(t_j)  \prod_{k=1}^{2^m} \langle t_k-t_{k + 1} \rangle^{-1} \, \prod_{\ell = 2}^{2^m} \d t_\ell \bigg) |W(\zeta_{2^m+1})|,
\end{align*}
where $h(t) := \| W(t,\cdot)\|_{L^2_x}^2$.

Now we are ready to estimate the relevant Schatten norm of $WS\overline{W}$:
\begin{align*}
& \|WS\overline{W}\|_{\C^{2^{m}}}^{2^{m}}\\
&=\int_{(\mathbb{R}^4)^2} |\K_{2^{m-1}}(\zeta_1,\zeta_{2^{m-1}+1})|^2 \, \d\zeta_1 \d\zeta_{2^{m-1}+1}   \\
&\lesssim \int_{\mathbb{R}^2} h(t_1) \bigg(\int_{\mathbb{R}^{2^{m-1}-1}} \prod_{j=2}^{2^{m-1}} h(t_j)  \prod_{k=1}^{2^{m-1}} \langle t_k-t_{k + 1} \rangle^{-1} \, \prod_{\ell = 2}^{2^{m-1}} \d t_\ell  \bigg)^2 h(t_{2^{m-1}+1}) \, \d t_1 \d t_{2^{m-1}+1}
\end{align*}
By expanding the square and an appropriate choice of labelling\footnote{If we use $t_2,\ldots,t_{2^{m-1}},t^*_2,\ldots,t^*_{2^{m-1}}$ for the variables arising from expanding the square, then the claimed formula follows by relabelling $t_j = t^*_{2^m + 2-j}$ for $j = 2^{m-1}+2,\ldots,2^m$.}, we see that
\begin{align}\label{i:LHS BL}
\|WS\overline{W}\|_{\C^{2^{m}}}^{2^{m}}
\lesssim
\int_{\mathbb R^{2^{m}}}\prod_{j=1}^{2^{m+1}} f_j(L_j\textbf{t})\,\d \textbf{t},
\end{align}
where $L_j:\mathbb R^{2^{m}}\ni \textbf{t}\mapsto v_j\cdot\textbf{t}\in\mathbb R$ with
$$v_j
=
\begin{cases}
e_j \quad &\text{if $ j=1,\dots,2^{m}$},\\
e_{j-2^{m}}-e_{j+1-2^{m}} \quad &\text{if $j=2^{m}+1,\dots,2^{m+1}$}, 
\end{cases}
$$
under the relation $e_{2^{m}+1}=e_1$, and 
\[
f_j(t)
=
\begin{cases}
h(t)\quad&\text{if $j=1,\dots, 2^{m}$,}\\
\langle t \rangle^{-1}\quad &\text{if $j=2^{m}+1,\dots, 2^{m+1}$}.
\end{cases}
\] 
One may notice that the term on the right-hand side of \eqref{i:LHS BL} has the structure of the left-hand side of the Brascamp--Lieb inequality and, in fact, we shall invoke the following characterization of finiteness of the Brascamp--Lieb constant (in the rank-one case) due to Barthe \cite{Barthe98}. 
Let $n$, $M \in \N$ and for $I=\{1, \ldots, M \}$ we denote by $\textbf{1}_{I}$ the vector of $\mathbb R^{M}$ such that $j$-th element of $\textbf{1}_{I}$ is $1$ if $j\in I$ and $0$ elsewhere.
\begin{lemma}[see Proposition~3 in \cite{Barthe98}]\label{l:Barthe}
There exists $C<\infty$ such that 
\[
\int_{\mathbb R^n}\prod_{j=1}^M f_j(x\cdot v_j)\,\d x
\leq
C\prod_{j=1}^M\|f_j\|_{L^{p_j}(\mathbb R)}
\]
holds for all non-negative $f_j$ in $L^{p_j}(\mathbb{R})$ if and only if the following holds. The vector $\frac1p=(\frac{1}{p_1}\,\dots,\frac{1}{p_M})$ belongs to the convex hull of $\textbf{1}_I$ for subsets such that $(v_i)_{i\in I}$ forms a basis of $\mathbb R^n$.
\end{lemma}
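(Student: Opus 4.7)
The plan is to prove each direction of the biconditional separately, and for the nontrivial sufficiency direction to use a mass-transport approach.

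For necessity, I would test the inequality against Gaussian inputs $f_j(t) = e^{-\pi \lambda_j t^2}$ with parameters $\lambda_j > 0$ and compute both sides explicitly. The left-hand side evaluates to a constant multiple of $(\det \sum_j \lambda_j v_j v_j^\top)^{-1/2}$, while the right-hand side scales as $C \prod_j \lambda_j^{-1/(2 p_j)}$. By the Cauchy--Binet identity,
\[
\det\Bigl(\sum_j \lambda_j v_j v_j^\top\Bigr) = \sum_{\substack{I \subseteq \{1,\ldots,M\} \\ |I| = n}} \Bigl(\prod_{i \in I} \lambda_i\Bigr) (\det V_I)^2,
\]
where $V_I$ is the matrix whose columns are $(v_i)_{i \in I}$, so only those subsets $I$ for which $(v_i)_{i \in I}$ forms a basis contribute. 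Requiring the Gaussian inequality to hold uniformly in $\lambda_1,\ldots,\lambda_M > 0$ then reduces, via a weighted AM--GM argument together with a separating-hyperplane argument in the complementary case, to the statement that $(1/p_1,\ldots,1/p_M)$ lies in the convex hull of the indicators $\textbf{1}_I$ over basis subsets $I$.

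For sufficiency, I would first dispatch the vertex case $1/p = \textbf{1}_I$ with $(v_i)_{i \in I}$ a basis: for $j \notin I$ we have $p_j = \infty$, so I would bound $f_j$ by $\|f_j\|_{\infty}$, then change variables $x \mapsto (x \cdot v_i)_{i \in I}$, whose Jacobian determinant is $|\det V_I|^{-1}$, and reduce the remaining integral to $\prod_{i \in I} \|f_i\|_{L^1}$. This yields the inequality with $C = |\det V_I|^{-1}$ at each vertex of the convex hull.

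The hard step, which I expect to be the main obstacle, is passing from the vertex inequalities to an arbitrary point of the convex hull, since the functional is not multilinear in a way that admits classical Riesz--Thorin interpolation. Barthe's strategy handles this by mass transportation: write $1/p = \sum_k t_k \textbf{1}_{I_k}$ as a convex combination of basis vertices, introduce one-dimensional Brenier maps $T_j \colon \R \to \R$ pushing the standard Gaussian forward to the probability density $f_j^{p_j}/\|f_j\|_{L^{p_j}}^{p_j}$, substitute $x$ as a $t_k$-weighted average of the basis inverses $V_{I_k}^{-\top}$ applied to the transported variables, and combine the pointwise AM--GM inequality with the concavity of $\log\det$ and the explicit Gaussian Brascamp--Lieb computation to close the estimate. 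A modern alternative is to run the heat flow on each $f_j^{p_j}$ and use the Bennett--Carbery--Christ--Tao monotonicity of the Brascamp--Lieb functional, which reduces the general case to its Gaussian endpoint.
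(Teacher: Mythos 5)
First, a point of orientation: the paper does not prove this lemma at all --- it is quoted as Proposition~3 of Barthe \cite{Barthe98} --- so your attempt is really a reconstruction of Barthe's theorem and should be judged against the known arguments. Your necessity half is correct and is exactly the standard one: Gaussian inputs give $\bigl(\det\sum_j\lambda_j v_jv_j^\top\bigr)^{-1/2}\lesssim\prod_j\lambda_j^{-1/(2p_j)}$, Cauchy--Binet converts the determinant into a sum of $\prod_{i\in I}\lambda_i$ over basis subsets $I$, and after taking logarithms and exploiting homogeneity the uniform inequality says that $s\mapsto\sum_j s_j/p_j$ is dominated by the support function of the basis polytope; separation then gives membership of $1/p$ in the convex hull of the $\mathbf{1}_I$. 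That is essentially Barthe's ``only if'' direction.

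For sufficiency, your vertex case (bounding the $L^\infty$ factors and changing variables with Jacobian $|\det V_I|^{-1}$) is right, but the step you flag as the main obstacle is not an obstacle, and your claim that classical interpolation is unavailable is mistaken: the functional is a positive multilinear form, so the multilinear Riesz--Thorin theorem (see e.g.\ \cite{BL76}) interpolates the vertex bounds directly, and iterating binary combinations reaches every point of the convex hull. Even more simply, no interpolation is needed: writing $\frac1p=\sum_k t_k\mathbf{1}_{I_k}$ (after discarding the factors with $p_j=\infty$ via $f_j\leq\|f_j\|_\infty$), one has $\prod_j f_j(x\cdot v_j)=\prod_k\bigl(\prod_{j\in I_k}f_j^{p_j}(x\cdot v_j)\bigr)^{t_k}$ since $\sum_{k:\,j\in I_k}t_k=\frac1{p_j}$, and H\"older with weights $t_k$ followed by the vertex computation for each $I_k$ gives the claim with $C=\prod_k|\det V_{I_k}|^{-t_k}$. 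The optimal-transport and heat-flow machinery you invoke is what one needs for the sharp constant and Gaussian extremizability, not for mere finiteness; it does imply the result, but your sketch is imprecise as written (Barthe's substitution is $x=\sum_j c_jT_j(y\cdot v_j)v_j$ with the Jacobian bounded below via the finite-dimensional Gaussian inequality, not a ``$t_k$-weighted average of the $V_{I_k}^{-\top}$''), and if you argue via Bennett--Carbery--Christ--Tao monotonicity you still must verify that the Gaussian constant is finite under the convex-hull hypothesis, which is again the Cauchy--Binet computation and should be stated. So: necessity fine, sufficiency salvageable, but the cleanest correct completion is the elementary H\"older/interpolation route you dismissed.
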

We will use this lemma with $n= 2^m$ and $M=2^{m+1}$. 
Let $I_0=\{2^{m}+1,2^{m}+2, \dots,2^{m+1}\}$ so that $\textbf{1}_{I_0}=(0,\dots,0;1,\dots,1)$; i.e. the first half of elements are all $0$ and the rests are all $1$ (the separator ``;" is positioned so that the components have been divided into exactly two equal-sized parts). 
For integers $i$, $j$ with $1\leq i \leq 2^m <  j\leq 2^{m+1}$ the transposition operator swapping $i$-th element and $j$-th element, denoted by $[i,j]$, provides
\[
[i,j]\textbf{1}_{I_0}
=
(0,\dots,0,1,0,\dots,0;1,\dots,1,0,1,\dots,1).
\]

From the definition of $(v_j)_j$, if $1\leq i \leq 2^m$, the vectors 
\[
(v_j)_{j \in (\{i\} \cup (I_0\setminus \{2^m + i\}))}
\]
form a basis of $\R^{2^m}$. 
In $\mathbb R^{2^{m+1}}$, considering $2^{m}$ points $[1,2^{m}+1]\textbf{1}_I, \dots, [2^{m},2^{m+1}]\textbf{1}_I$, one may discover that the middle point of the plane spanned by those vectors, namely, 
\[
(2^{-{m}},\dots,2^{-{m}};1-2^{-{m}},\dots, 1-2^{-{m}})
\]
lies on the boundary of the convex hull of characteristic vectors. 
Hence, if we set
\begin{align*}
(p_1^{-1},\dots, p_{2^{m}}^{-1};p_{2^{m}+1}^{-1},\dots,p_{2^{m+1}}^{-1}) =
(2^{-{m}},\dots,2^{-{m}};1-2^{-{m}},\dots, 1-2^{-{m}}),
\end{align*}
then Lemma \ref{l:Barthe} implies that
\begin{align*}
\int_{\mathbb R^{2^{m}}}\prod_{j=1}^{2^{m+1}}f_j(L_j\textbf{t})\,\d\textbf{t}
\lesssim
\prod_{j=1}^{2^{m+1}}\|f_j\|_{L_t^{p_j}} \lesssim \|W\|_{L_t^{^{2^{m+1}}}L_x^2}^{2^{m+1}},
\end{align*}
which ends the proof.
\end{proof}

\section*{Appendix: Necessary conditions} \label{appendix}
We provide a justification of the necessary condition \eqref{e:necessary} for \eqref{e:ONSwave}. The proof here is not new and may be found in \cite[Proposition 9]{BLN_Forum}. 

\subsection*{The necessity of $\beta \leq \frac{q}{2}$}
The argument we give here is the same as in the proof of \cite[Proposition 9]{BLN_Forum}; since $r < \infty$ was a blanket assumption throughout \cite{BLN_Forum}, here we consider the only case $r = \infty$.

Consider the family $(f_j)_{j \geq 1}$ given by 
\[
f_j(x) = Ug(-j,x)
\]
where the function $g$ satisfies
\[
\widehat{g}(\xi) = c\frac{\mathrm{1}_{[\pi,3\pi]}(|\xi|)}{|\xi|^{s + \frac{d-1}{2}}}
\]
and the constant $c > 0$ will be chosen shortly. Parseval's identity and changing to polar coordinates reveals 
\[
\langle f_j,f_k \rangle_{\dot{H}^s} = \frac{1}{(2\pi)^d}\int_{\mathbb{R}^d} e^{i(j-k)|\xi|}|\widehat{g}(\xi)|^2 |\xi|^{2s} \, \mathrm{d}\xi =  \frac{c^2|\mathbb{S}^{d-1}|}{(2\pi)^d}\int_\pi^{3\pi} e^{i(k-j)\rho} \, \mathrm{d}\rho
\]
and so $(f_j)_{j \geq 1}$ is an orthonormal family in $\dot{H}^s(\mathbb{R}^d)$ upon an appropriate choice of $c$.

Also, assuming $\lambda_j > 0$ for each $j$ we have
\begin{align*}
\bigg\| \sum_j \lambda_j |Uf_j|^2 \bigg\|_{L^{\frac{q}{2}}_tL^\infty_x}^{\frac{q}{2}} & \geq \int_{\mathbb{R}} \bigg( \sum_j \lambda_j |Ug(t-j,0)|^2 \bigg)^{\frac{q}{2}} \mathrm{d}t \\
& \geq \sum_{n \geq 1} \lambda_n^{\frac{q}{2}} \int_n^{n+\varepsilon_0}   |Ug(t-n,0)|^q \, \mathrm{d}t \\
& = C \sum_{n \geq 1} \lambda_n^{\frac{q}{2}} 
\end{align*}
where $\varepsilon_0 > 0$ is a sufficiently small constant and
\[
C = \int_0^{\varepsilon_0}   |Ug(t,0)|^q \, \mathrm{d}t  = c^q  \int_0^{\varepsilon_0} \bigg| \int_{|\xi| \in [\pi,3\pi]} e^{it|\xi|} \frac{\mathrm{d}\xi}{|\xi|^{s + \frac{d-1}{2}}} \bigg|^q \mathrm{d}t.
\]
The constant $C$ is clearly finite, and a sufficiently small choice of $\varepsilon_0$ guarantees $|t|\xi|| \leq \frac{1}{10}$ for $\xi \in [\pi,3\pi]$ and thus
\begin{align*}
C & \geq c^q  \int_0^{\varepsilon_0} \bigg| \int_{|\xi| \in [\pi,3\pi]} \cos(t|\xi|) \frac{\mathrm{d}\xi}{|\xi|^{s + \frac{d-1}{2}}} \bigg|^q \mathrm{d}t \\
& \geq  \bigg(\frac{c}{2} \bigg)^q \int_0^{\varepsilon_0} \bigg(\int_{|\xi| \in [\pi,3\pi]}  \frac{\mathrm{d}\xi}{|\xi|^{s + \frac{d-1}{2}}} \bigg)^q \mathrm{d}t
> 0.
\end{align*}
Hence, if \eqref{e:ONSwave} holds we deduce $\|\lambda\|_{\frac{q}{2}} \lesssim \|\lambda\|_{\beta}$ and this implies $\beta \leq \frac{q}{2}$.

\subsection*{The necessity of $\beta \leq \beta_{\frac{d}{2}}(q,r)$}
Let $B(x,r)$ be the ball in $\mathbb{R}^d$ centred at $x$ with radius $r$. Consider the family $(f_j)_{j = 1}^N$ given by 
\[
\widehat{f_j}(\xi) = \mathrm{1}_{B(0,1)}(R(\xi - v_j)).
\]
Here, $v_1,\ldots,v_N \in \mathbb{R}^d$ are fixed vectors satisfying $|v_j| \in [1,2]$ and chosen so that the balls $\{ B(v_j,\frac{1}{R}) \}_{j=1}^N$ are disjoint. In particular, this means that the functions $(\widehat{f_j})_{j = 1}^N$ have disjoint support and thus $(f_j)_{j = 1}^N$ is an orthogonal family. Moreover, we choose a maximal collection of vectors $v_1,\ldots,v_N$ under the above constraints, which means $N \sim R^d$. 

Next we claim that $|Uf_j| \gtrsim R^{-d} 1_{B(0,c'R)}$ for an appropriately small choice of the constant $c'$. To see this, first assume that $v_j = \rho e_d$ for some $\rho \in [1,2]$, and write
\begin{align*}
x \cdot \xi + t|\xi| & = \bar{x} \cdot \bar{\xi} + (x_d + t)(\xi_d - \rho) + t(|\xi| - \xi_d) + \rho(x_d + t) \\
& = \bar{x} \cdot \bar{\xi} + (x_d + t)(\xi_d - \rho) + t\frac{|\bar{\xi}|^2}{|\xi| + \xi_d} + \rho(x_d + t) =: \Phi_{t,x}(\xi) + \rho(x_d + t).
\end{align*}
Here, we use the notation $x = (\bar{x},x_d)$.  If $c'$ is chosen sufficiently small, then for $(t,x) \in B(0,c'R)$ and $\xi$ in the support of $\widehat{f_j}$ (in particular, $|\bar{\xi}| \leq \frac{1}{R}, |\xi_d - \rho| \leq \frac{1}{R}$) we get
\[
|\Phi_{t,x}(\xi)| \leq \frac{1}{10} 
\]
and thus
\[
|Uf_j(t,x)| =  \bigg|\int_{B(\rho e_d,\frac{1}{R})} e^{i\Phi_{t,x}(\xi)} \, \mathrm{d}\xi  \bigg| \gtrsim \frac{1}{R^d}
\]
as claimed. Taking $\lambda_j = \|f_j\|_{\dot{H}^s}^2$ and $g_j =\|f_j\|^{-1}_{\dot{H}^s} f_j$ for each $j = 1,\ldots,N$, and recalling that $N \sim R^d$, we easily deduce that
\begin{align*}
\bigg\| \sum_{j=1}^N \lambda_j |Ug_j|^2 \bigg\|_{L^{\frac{q}{2}}_tL^{\frac{r}{2}}_x} = \bigg\| \sum_{j=1}^N |Uf_j|^2 \bigg\|_{L^{\frac{q}{2}}_tL^{\frac{r}{2}}_x}  \gtrsim R^{\frac{2}{q} + \frac{2d}{r}-d}.
\end{align*}
If we assume \eqref{e:ONSwave} then 
\[
R^{\frac{2}{q} + \frac{2d}{r}-d} \lesssim \bigg(\sum_{j=1}^N \|f_j\|_{\dot{H}^s}^{2\beta}  \bigg)^{\frac{1}{\beta}} \lesssim R^{\frac{d}{\beta} - d}
\]
since $ \|f_j\|_{\dot{H}^s}^2 \sim R^{-d}$. Therefore $\frac{2}{q} + \frac{2d}{r} \leq \frac{d}{\beta}$ and this yields the necessary condition $\beta \leq \beta_{\frac{d}{2}}(q,r)$.
\begin{Acknowledgements}
The first author would like to express his appreciation to Sanghyuk Lee and Shohei Nakamura for a number of extremely helpful discussions related to this work. The second author shows his deep gratitude to Yutaka Terasawa and Satoshi Masaki for the opportunity at the RIMS conference ``Harmonic Analysis and Partial Differential Equations" in 2022.
The third author also thanks Mitsuru Sugimoto for his continuous encouragement. 
\end{Acknowledgements}


\end{document}